\newtheorem{thm}{Theorem}[section]
\newtheorem{lema}[thm]{Lemma}
\newtheorem{cor}[thm]{Corollary}
\newtheorem{ques}[thm]{Question}
\newtheorem{con}[thm]{Conjecture}
\newtheorem{rem}[thm]{Remark}
\title{Extending a conjecture of Graham and Lov\'{a}sz\\ on the distance characteristic polynomial}
\author{
Aida Abiad
\thanks{\texttt{a.abiad.monge@tue.nl}, Department of Mathematics and Computer Science, Eindhoven University of Technology, The Netherlands}
\thanks{Department of Mathematics: Analysis, Logic and Discrete Mathematics, Ghent University, Belgium} 
\thanks{Department of Mathematics and Data Science, Vrije Universiteit Brussel, Belgium} 
\and 
Boris Brimkov
\thanks{\texttt{boris.brimkov@sru.edu}, Department of Mathematics and Statistics, Slippery Rock University, U.S.A.} 
\and 
Sakander Hayat
\thanks{\texttt{sakander.hayat@giki.edu.pk}, Faculty of Engineering Sciences, GIK Institute of Engineering Sciences and Technology, Topi, Swabi, Pakistan} 
\and
Antonina P. Khramova
\thanks{\texttt{a.khramova@tue.nl}, Department of Mathematics and Computer Science, Eindhoven University of Technology, The Netherlands} 
\and
Jack H. Koolen
\thanks{\texttt{koolen@ustc.edu.cn}, School of Mathematical Sciences, University of Science and Technology of China, China} 
}
\date{}
\begin{document}

\maketitle

\begin{abstract}
Graham and Lov\'{a}sz conjectured in 1978 that the sequence of normalized coefficients of the distance characteristic polynomial of a tree of order $n$ is unimodal with the maximum value occurring at $\lfloor\frac{n}{2}\rfloor$. In this paper we investigate this problem for block graphs. In particular, we prove the unimodality part and we establish the peak for several extremal cases of uniform block graphs with small diameter.
\end{abstract}

\section{Introduction}\label{intro}
In their seminal work on spectral properties of the distance matrix $D$ of a tree $T$, Graham and Lov\'asz \cite{GL78} showed that, for a tree $T$ and $c_k(T)$ denoting the coefficient of $x^k$ in $\text{det}(D(T)-xI)=(-1)^np_{D(G)}(x)$, the quantity $d_k(T)=(-1)^{n-1}c_k(T)/2^{n-k-2}$  is determined as a fixed linear combination of the number of certain subtrees in $T$. The values $d_k(T)$ are called the \emph{normalized coefficients} of the distance characteristic polynomial of $T$. 

A sequence $a_0,a_1,a_2,\ldots, a_n$ of real numbers is {\em unimodal} if there is a $k$ such that $a_{i-1}\leq a_{i}$ for $i\leq k$ and $a_{i}\geq a_{i+1}$ for $i\geq k$. Graham and Lov\'asz \cite{GL78} conjectured that the sequence $d_0(T),\ldots,d_{n-2}(T)$ of normalized coefficients of the distance characteristic polynomial of a tree is unimodal with the maximum value occurring at $\lfloor\frac{n}{2}\rfloor$ for a tree $T$ of order $n$.
Little progress on this problem is known. Collins \cite{C89} confirmed the conjecture for stars, and also showed that for a paths the sequence is unimodal with a maximum value at $(1-\frac{1}{\sqrt{5}})n$. Thus, Graham and Lov\'asz conjecture was reformulated as follows:

\begin{con}\cite{GL78,C89}\label{con:coefficientstree}
The normalized coefficients of the distance characteristic polynomial of any tree with $n$ vertices are unimodal with peak between $\lfloor \frac{n}{2} \rfloor$ and $\lceil(1-\frac{1}{\sqrt{5}})n\rceil$.
\end{con}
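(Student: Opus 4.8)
The plan is to attack the two halves of Conjecture~\ref{con:coefficientstree} --- unimodality and the location of the peak --- with different tools, expecting only the first to be within reach in full generality. For the unimodality, I would start from the Graham--Lov\'asz description recalled above: $d_k(T)$ is a universal (i.e.\ $T$-independent) linear combination of counts of certain subtrees of $T$. The first concrete step is to make this formula completely explicit, which I would do by expanding $\det(D(T)-xI)$ through its principal minors, $c_k(T)=(-1)^{n-k}\sum_{|S|=n-k}\det D[S]$, and substituting the classical closed form for $\det D[S]$ of a vertex set $S$ of a tree in terms of the combinatorics of the minimal subtree spanning $S$ (its degrees and number of branch vertices). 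This should yield $d_k(T)=\sum_j \alpha_{k,j}\,f_j(T)$ with binomial-and-power-of-two universal coefficients $\alpha_{k,j}$ and with $f_j(T)$ a count of subtrees (or subforests) of $T$ of a prescribed size; pinning down the sign pattern of the $\alpha_{k,j}$ is the main piece of bookkeeping at this stage.

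Armed with such a formula, I would try to prove the stronger statement that $(d_k(T))_k$ is log-concave, hence unimodal. Two routes look plausible. The first is real-rootedness of $\sum_k d_k(T)\,x^k$; the difficulty is that there is no evident deletion--contraction recursion on trees that preserves real-rootedness, so one would need an interlacing argument tailored to pendant-edge removal. The second, and to my mind more promising, is to exhibit $(d_k(T))_k$ --- after the right reindexing --- as a nonnegative combination of the matching sequence or an independence sequence of some auxiliary combinatorial structure attached to $T$, so that Heilmann--Lieb real-rootedness of matching polynomials, or recent ultra-log-concavity results for matroid independence numbers, apply. The genuine obstacle for both routes is that the Graham--Lov\'asz coefficients $\alpha_{k,j}$ alternate in sign, and a signed combination of log-concave sequences need not be log-concave; one must therefore either find a sign-reversing involution collapsing the formula to a manifestly positive one, or prove the quadratic inequalities $d_k(T)^2\ge d_{k-1}(T)\,d_{k+1}(T)$ directly by an injection between the relevant families of subtrees.

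For the peak, I would argue by induction on $n$ using local moves on trees, bookended by the two cases settled by Collins~\cite{C89}: the star $K_{1,n-1}$ has its peak at $\lfloor n/2\rfloor$ and the path $P_n$ at $\lceil(1-\tfrac{1}{\sqrt5})n\rceil$. The aim is to show that subdividing an edge never shifts the argmax of $(d_k(T))_k$ to the left, nor past the path value, while the reverse ``pendant-path contraction'' move never shifts it to the right past the star value; since every tree is obtained from a star by a sequence of such moves, this would sandwich the peak in the required interval. Technically this needs a deletion--contraction-type recursion relating $d_k(T)$ to $d_k$ and $d_{k-1}$ of the tree (or forest) obtained by deleting a pendant edge and its leaf, together with a monotonicity lemma showing that each move perturbs the peak by at most one in the correct direction.

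The step I expect to be the true bottleneck is the log-concavity argument in full generality, precisely because the defining formula is signed and no combinatorial model is known whose matching or independence sequence is exactly $(d_k(T))_k$; and establishing that the peak lies in the stated narrow interval --- rather than merely ``somewhere in the middle'' --- is likely harder still for arbitrary trees. This is presumably why the present paper develops these ideas only for block graphs, proving the unimodality part in that setting and settling the exact peak only for uniform block graphs of small diameter.
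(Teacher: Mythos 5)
The statement you are trying to prove is a \emph{conjecture} in this paper: it is quoted from Graham--Lov\'asz and Collins, the paper gives no proof of it, and in full generality no proof is known (only the unimodality half is settled, by Aalipour et al.~\cite{GRWC2015}; the peak-location half is open). Your text is accordingly a research plan rather than a proof, and by your own admission both of its load-bearing steps are missing: you give no argument for the inequalities $d_k(T)^2\ge d_{k-1}(T)d_{k+1}(T)$ beyond naming possible strategies, and the ``monotonicity lemma'' that edge subdivision and pendant-path contraction move the argmax by at most one in the right direction is not even sketched. That lemma is essentially equivalent to the open peak half of the conjecture, and there is no known local move on trees with such control; the paper's own results should make you suspicious of it, since for closely related graphs the peak is quite mobile (e.g.\ roughly $\frac{kt(t-1)}{2(t+1)}$ for windmill graphs, and empirically anywhere between $\lfloor n/3\rfloor$ and $\lfloor n/2\rfloor$ for block paths).

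On the unimodality half, your proposed routes are also far heavier than necessary, and one of your stated obstacles is illusory: $\det(D(T)-xI)$ is, up to sign, the characteristic polynomial of the real symmetric matrix $D(T)$, so it is \emph{automatically} real-rooted --- no pendant-edge interlacing, Heilmann--Lieb, or matroid machinery is needed. The known argument (Aalipour et al., mirrored for block graphs in Lemma~\ref{claim} and Theorem~\ref{theo:unimodality} of this paper) is: real-rootedness gives log-concavity of the coefficient sequence by Lemma~\ref{knownunimodalitylemma}(i); the sign condition $(-1)^{n-1}c_k(T)>0$ for $0\le k\le n-2$ follows from the inertia of $D(T)$ (one positive and $n-1$ negative eigenvalues, as in Edelberg--Garey--Graham); positivity plus log-concavity gives unimodality by Lemma~\ref{knownunimodalitylemma}(ii); and since $d_k(T)$ differs from $(-1)^{n-1}c_k(T)$ by the factor $2^{k}$ times a constant, log-concavity and unimodality transfer to the normalized coefficients. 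In other words, the sign problem you plan to fight term by term in the Graham--Lov\'asz subtree expansion is resolved wholesale by counting eigenvalues. None of this, however, touches the peak location, which is the genuinely open part; your sandwich-by-local-moves strategy would need a new idea that neither your sketch nor the literature currently provides.
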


Aalipour et al. \cite{GRWC2015} confirmed the unimodality part of Conjecture \ref{con:coefficientstree} by proving that the sequence of normalized coefficients is
indeed always unimodal; in fact, they proved the stronger statement that this sequence is log-concave. 


A natural and widely used generalization of trees are block graphs (also known as clique trees). A connected graph is a \emph{block graph} or \emph{clique tree} if its blocks ($2$-connected components) are cliques. Graham, Hoffman, and Hosoya \cite{GHH77} showed that the determinant of the distance matrix of any graph only depends on the determinant of its $2$-connected components.
Bapat and  Sivasubramanian \cite{BS} extended the result of the determinant of the distance matrix of a tree by Graham and Lov\'asz from \cite{GL78} to block graphs, and Das and Mohanty \cite{DM} did the same for multi-block graphs. Also the distance eigenvalues of a block graph have received some attention \cite{DLbook, LLL,JGZ21,XLS2020}. The study of distance matrix has a long history, and many results concerning the distance matrix and distance eigenvalues are reported in the literature, for a survey we refer the reader to the survey papers by Aouchiche and Hansen \cite{AH} and Hogben and Reinhart \cite{HR21}.

In this article we consider the sequence of coefficients $c_0,c_1,\dots,c_n$ of the distance characteristic polynomial of a block graph. Motivated by Conjecture \ref{con:coefficientstree}, we investigate the following question:

\begin{ques} \label{que:coefficientsblock}
 Is the sequence of coefficients $c_0,c_1,\dots,c_n$ of the distance characteristic polynomial of a block graph unimodal, and where does it peak?
\end{ques}

This paper is structured as follows. In Section \ref{preliminaries}, we start by recalling some definitions and preliminary results. In Section \ref{sec:emb} we develop a general theory regarding the eigenvalues of metric spaces. We use this in Section \ref{sec:unimodality} to show the unimodality part of Question \ref{que:coefficientsblock} for block graphs, which extends the corresponding result for trees by Aalipour {\it et al.}~\cite{GRWC2015}. In Section \ref{sec:peak} we prove that the peak part of Question~\ref{que:coefficientsblock} holds for several extremal classes of block graphs with small diameter. As a corollary of our results we obtain Collins' result for stars \cite{C89}. Although we show that the peak can move quite a bit when considering block graphs, our results provide evidence that Conjecture~\ref{con:coefficientstree} might still hold in a more general setting.

\section{Preliminaries}\label{preliminaries}

Throughout this paper, $G=(V,E)$ denotes an undirected, simple, connected and loopless graph with $n$ vertices. The \emph{distance matrix} $D(G)$ of a connected graph $G$ is the matrix indexed by the vertices of $G$ whose $(i,j)$-entry equals the distance  between the vertices $v_i$ and $v_j$, i.e., the length of a shortest path between $v_i$ and $v_j$. Dependence on $G$ may be omitted when it is clear from the context. The characteristic polynomial of $D$ is defined by $p_{D}(x)=\det(xI-D)$ and is called the \emph{distance characteristic polynomial} of $G$. Since $D$ is a real symmetric matrix, all of  the roots of the distance characteristic polynomial are real. 

A metric space can be attached to any connected graph $G =(V,E)$ in the following way. The \emph{path metric} of $G$, denoted $d_{G}$, is the metric where for all
vertices $u,v\in V$, $d_{G}(u,v)$ is the distance between $u$ and $v$ in $G$. Then, $(V,d_{G})$ is a metric space, called the \emph{graphic metric space}
associated with $G$.

A \emph{cut vertex} of a graph $G$ is a vertex whose deletion increases the number of connected components of $G$. A \emph{block} of $G$ is a maximal connected subgraph of $G$ which has no cut vertices. Thus, a block is either a maximal
2-connected subgraph, or a cut-edge, or an isolated vertex, and every such
subgraph is a block. Two blocks of $G$ can overlap in at most one vertex, which is a cut-vertex; hence, every edge of $G$ lies in a unique block, and $G$ is the
union of its blocks. A connected graph $G$  is called a \emph{block graph} if all of its blocks are cliques. In
particular, we say that $G$ is a \emph{$t$-uniform block graph} if all of its cliques have size $t$.

Given two graphs $G$ and $H$, their \emph{Cartesian product} is the graph $G\square H$ whose vertex
set is $V(G)\times V(H)$ and whose edges are the pairs $((a,x),(b,y))$ with $a,b\in V(G)$, $x,y\in V(H)$ and either $(a,b)\in E(G)$ and $x=y$, or $a=b$ and $(x,y)\in E(H)$.
The Cartesian product $H_{1}\square\cdots\square H_{k}$ of graphs $H_{1},\ldots,H_{k}$ is also denoted $\prod_{h=1}^{k}H_{h}$.

For integers $d\geq2$ and $n\geq2$, the \emph{Hamming graph} $H(d,n)$ is a graph whose vertex set is the $d$-tuples with elements from $\{0,1,2,\ldots,n-1\}$, where two vertices are adjacent if and only if their corresponding $d$-tuples differ only in one coordinate. Equivalently, $H(d,n)$ is the Cartesian product of $d$ copies of $K_n$. For a positive integer $d$, the \emph{hypercube} or the \emph{$d$-cube} is the graph $H(d,2)$.

A metric space $(X_1,d_1)$ is \emph{isometrically embeddable} in a metric space $(X_2,d_2)$ if there exists a mapping $\sigma:X_1\rightarrow X_2$ such that $d_2(\sigma(a), \sigma(b))=d_1(a,b)$. If $(X_1,d_1)$ and $(X_2,d_2)$ are graphic metric spaces associated with graphs $G$ and $H$ respectively, then $G$ is called an \emph{isometric subgraph} of $H$.
The cases when the graph $H$ is a hypercube, a Hamming graph, or
a Cartesian product of cliques of different sizes are of much importance. For other definitions and notations related to embeddability of metric spaces, we refer the reader to~\cite{TD1987, DLbook}.

As mentioned earlier, Graham, Hoffman, and Hosoya \cite{GHH77} proved the following elegant formula to calculate the determinant of the distance matrix of a block graph. Note that this formula depends only on the sizes of the blocks and not on the graph structure.
\begin{thm}[\cite{GHH77}]\label{GHH}
If $G$ is a block graph with blocks $G_{1},~G_2,~\ldots,~G_t$, then
\begin{eqnarray*}
\textnormal{cof}~D(G)&=&\prod\limits_{i=1}^{t}\textnormal{cof}~D(G_i),\\
\det D(G)&=&\sum\limits_{i=1}^{t}\det D(G_i)\prod\limits_{j\neq i}\textnormal{cof}~D(G_j).
\end{eqnarray*}
\end{thm}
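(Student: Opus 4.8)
The plan is to reduce everything to the case of two blocks sharing one cut vertex, and to treat that case by a Schur-complement computation that exploits the additivity of distances across a cut vertex. For the reduction I would induct on the number of blocks $t$ using the block--cut tree of $G$: if $t\geq 2$, that tree has a leaf block, i.e.\ a block $G_t$ meeting $G':=G_1\cup\cdots\cup G_{t-1}$ in exactly one cut vertex, and $G'$ is again a block graph, with blocks $G_1,\dots,G_{t-1}$. Hence it suffices to prove the two-block identities $\textnormal{cof}\,D(G)=\textnormal{cof}\,D(G')\,\textnormal{cof}\,D(G_t)$ and $\det D(G)=\det D(G')\,\textnormal{cof}\,D(G_t)+\textnormal{cof}\,D(G')\,\det D(G_t)$; the general product and ``derivation'' formulas then follow by applying the induction hypothesis to $G'$ (the case $t=1$ being trivial).

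For the two-block case, write $G=G_1\cup G_2$ with $V(G_1)\cap V(G_2)=\{v\}$, order $G_1$ with $v$ last, and order $G$ as $V(G_1)$ followed by $V(G_2)\setminus\{v\}$. Since $v$ is a cut vertex, distances within $G_i$ are unchanged in $G$ and $d_G(u,w)=d_{G_1}(u,v)+d_{G_2}(v,w)$ for $u\in V(G_1)$ and $w\in V(G_2)\setminus\{v\}$, so
$$
D(G)=\begin{pmatrix} D(G_1) & C\\ C^{\top} & B\end{pmatrix},\qquad C=\mathbf{a}'\mathbf{1}^{\top}+\mathbf{1}\mathbf{b}^{\top},
$$
where $B$ is $D(G_2)$ with its $v$-row and $v$-column deleted, $\mathbf{b}$ is the vector of distances from $v$ in $G_2$, and $\mathbf{a}'$ is the $v$-column of $D(G_1)$; in particular $C$ has rank at most $2$. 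I would regard the entries of $D(G_1)$ and of $(B,\mathbf{b})$ as indeterminates, so that the determinants below are invertible over the rational function field and the resulting polynomial identity then specializes back to all real values. The Schur complement gives $\det D(G)=\det(B)\,\det\!\big(D(G_1)-CB^{-1}C^{\top}\big)$, and writing $CB^{-1}C^{\top}=UWU^{\top}$ with $U=[\,\mathbf{a}'\mid\mathbf{1}\,]$ and $W=\left(\begin{smallmatrix}\alpha&\beta\\\beta&\gamma\end{smallmatrix}\right)$, $\alpha=\mathbf{1}^{\top}B^{-1}\mathbf{1}$, $\beta=\mathbf{1}^{\top}B^{-1}\mathbf{b}$, $\gamma=\mathbf{b}^{\top}B^{-1}\mathbf{b}$, the matrix determinant lemma turns this into $\det D(G_1)\,\det\!\big(I_2-WU^{\top}D(G_1)^{-1}U\big)$.

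The crucial simplification is that $\mathbf{a}'=D(G_1)\mathbf{e}_v$, hence $D(G_1)^{-1}\mathbf{a}'=\mathbf{e}_v$, which together with $(\mathbf{a}')_v=0$ and $(\mathbf{1})_v=1$ collapses $U^{\top}D(G_1)^{-1}U$ to $\left(\begin{smallmatrix}0&1\\1&\delta\end{smallmatrix}\right)$, where $\delta=\mathbf{1}^{\top}D(G_1)^{-1}\mathbf{1}$. Evaluating the $2\times2$ determinant yields
$$
\det D(G)=\det D(G_1)\cdot\Big[\det(B)\big((1-\beta)^2-\alpha\gamma\big)-\delta\,\gamma\det(B)\Big],
$$
and it then remains only to identify $\det(B)\big((1-\beta)^2-\alpha\gamma\big)=\textnormal{cof}\,D(G_2)$ (via $\textnormal{cof}\,M=-\det\left(\begin{smallmatrix}M&\mathbf{1}\\\mathbf{1}^{\top}&0\end{smallmatrix}\right)$ and the same Schur-complement step), $\gamma\det(B)=-\det D(G_2)$ (since $\det\left(\begin{smallmatrix}0&\mathbf{b}^{\top}\\\mathbf{b}&B\end{smallmatrix}\right)=-\mathbf{b}^{\top}\operatorname{adj}(B)\mathbf{b}$), and $\delta\det D(G_1)=\textnormal{cof}\,D(G_1)$; substituting produces exactly $\det D(G)=\det D(G_1)\,\textnormal{cof}\,D(G_2)+\textnormal{cof}\,D(G_1)\,\det D(G_2)$. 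Finally, the product formula for $\textnormal{cof}\,D(G)$ comes from running the identical computation on the bordered matrix $\left(\begin{smallmatrix}D(G)&\mathbf{1}\\\mathbf{1}^{\top}&0\end{smallmatrix}\right)$, whose off-diagonal block is again of rank at most $2$ and whose determinant is $-\textnormal{cof}\,D(G)$.

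I expect the main points requiring care to be the genericity step---which is genuinely needed, since distance matrices of small blocks such as $K_2$ are singular, so the invertibility used above must be legitimized by treating both sides as polynomials in the entries and invoking density---and the bookkeeping when matching the terms of the $2\times2$ determinant to $\det$ and $\textnormal{cof}$ of the two pieces while keeping the asymmetric role of the shared vertex $v$ straight. Everything else, namely the block decomposition, the Schur complement, and the induction over $t$, is routine.
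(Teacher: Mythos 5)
This theorem is stated in the paper with a citation to \cite{GHH77} and no proof is given there, so there is nothing internal to compare against; I am therefore assessing your argument on its own merits. It is correct in outline, and the computations you display check out: with $W=\left(\begin{smallmatrix}\alpha&\beta\\ \beta&\gamma\end{smallmatrix}\right)$ and $U^{\top}D(G_1)^{-1}U=\left(\begin{smallmatrix}0&1\\ 1&\delta\end{smallmatrix}\right)$ one indeed gets $\det\big(I_2-WU^{\top}D(G_1)^{-1}U\big)=(1-\beta)^2-\alpha\gamma-\gamma\delta$, and the three identifications $\det(B)\big((1-\beta)^2-\alpha\gamma\big)=\textnormal{cof}\,D(G_2)$, $\gamma\det(B)=-\det D(G_2)$, $\delta\det D(G_1)=\textnormal{cof}\,D(G_1)$ are all valid, giving the two-piece determinant formula. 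I also verified the part you only assert: running the same computation on $\left(\begin{smallmatrix}D(G)&\mathbf{1}\\ \mathbf{1}^{\top}&0\end{smallmatrix}\right)$ works, with the analogue of $W$ equal to $\left(\begin{smallmatrix}0&1\\ 1&\gamma-(1-\beta)^2/\alpha\end{smallmatrix}\right)$ and $\det\left(\begin{smallmatrix}B&\mathbf{1}\\ \mathbf{1}^{\top}&0\end{smallmatrix}\right)=-\alpha\det(B)$, so the factors recombine to $-\textnormal{cof}\,D(G_1)\,\textnormal{cof}\,D(G_2)=-\textnormal{cof}\,D(G)$. The block--cut-tree induction and the substitution of the induction hypothesis are routine, as you say, and note that nothing in your argument uses that the blocks are cliques, so you actually prove the theorem in the original Graham--Hoffman--Hosoya generality.

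The one step you must implement more carefully than your closing remark suggests is the genericity. You must keep the $(v,v)$ entries at zero (the collapse uses $(\mathbf{a}')_v=0$), but if you also keep the diagonal of $B$ at zero, then for a leaf block $G_t\cong K_2$ the matrix $B$ is the $1\times 1$ zero matrix \emph{identically}, and no perturbation or density argument within hollow matrices makes $B^{-1}$ legitimate; so the issue is not merely that $D(K_2)$ is singular at the specialization. The fix is exactly what your phrase ``entries of $(B,\mathbf{b})$'' hints at: take \emph{all} entries of $B$, including its diagonal, as independent indeterminates, observe that your identifications of $\textnormal{cof}\,D(G_2)$ and $\det D(G_2)$ only use that the $(v,v)$ entry of $D(G_2)$ vanishes and never the diagonal of $B$, prove the identity over the rational function field, and then specialize; alternatively, take the Schur complement with respect to $D(G_1)$ (a generic hollow symmetric matrix of size at least $2$, hence invertible) or dispose of the $K_2$ case by direct expansion. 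For comparison, the classical proofs (the original one in \cite{GHH77}, and the treatment in Bapat's book on graphs and matrices) avoid inverses and genericity altogether by performing row and column operations on the bordered matrix $\left(\begin{smallmatrix}0&\mathbf{1}^{\top}\\ \mathbf{1}&D(G)\end{smallmatrix}\right)$, exploiting the same additivity of distances across the cut vertex; that route yields both the $\textnormal{cof}$ and $\det$ formulas simultaneously and is lighter than your Schur-complement argument, though yours is sound once the indeterminacy of the diagonal of $B$ is made explicit.
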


The \emph{coefficient sequence} of a real polynomial $p(x)=a_nx_n+\cdots+a_1x+a_0$ is the sequence $a_0,a_1,a_2,\ldots,a_n$. The polynomial $p$ is \emph{real-rooted} if all roots of $p$ are real (by convention, constant polynomials are considered real-rooted). A sequence $a_0,a_1,a_2,\ldots, a_n$ of real numbers is {\em unimodal} if there is a $k$ such that $a_{i-1}\leq a_{i}$ for $i\leq k$ and $a_{i}\geq a_{i+1}$ for $i\geq k$, and the sequence is {\em log-concave} if $a_j^2\geq a_{j-1}a_{j+1}$ for all $j=1,\dots,  n-1$.

Finally, we will make use of the following well-known result (see for instance [\cite{2}, Lemma 1.1] and [\cite{5}, Theorem B, p. 270]) with the additional assumption that the polynomial coefficients are nonnegative, but it is straightforward to remove that assumption.

\begin{lema}\label{knownunimodalitylemma}
	\leavevmode
\begin{description}
\item[$(i)$] If $p(x)=a_nx_n+\cdots+a_1x+a_0$ is a real-rooted polynomial, then the coefficient sequence $a_i$ of $p$ is log-concave.
\item[$(ii)$] If the sequence $a_0,a_1,a_2,\ldots,a_n$ is positive and log-concave, then it is unimodal.
\end{description}
\end{lema}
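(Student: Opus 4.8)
The plan is to handle the two parts separately: part~$(i)$ via Newton's inequalities (or an equivalent reduction to a quadratic), and part~$(ii)$ by a short monotonicity argument on consecutive ratios. For part~$(i)$, fix $1\le k\le n-1$. I would use the two standard operations under which the family of real-rooted polynomials is closed: differentiation $p\mapsto p'$ (by Rolle's theorem, counting multiplicities, $p'$ has all $\deg p-1$ of its roots real) and reversal $p\mapsto x^{\deg p}p(1/x)$ (whose nonzero roots are the reciprocals of those of $p$). Applying to $p$ differentiation $k-1$ times, then reversal, then differentiation $n-k-1$ more times leaves a real-rooted polynomial of degree at most $2$, of the form $Ax^2+Bx+C$ with $A=\tfrac12(k-1)!\,(n-k+1)!\,a_{k-1}$, $B=k!\,(n-k)!\,a_k$, and $C=\tfrac12(k+1)!\,(n-k-1)!\,a_{k+1}$. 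Real-rootedness forces $B^2\ge 4AC$, which after cancelling factorials becomes
\[ a_k^2 \;\ge\; \frac{(k+1)(n-k+1)}{k(n-k)}\, a_{k-1}a_{k+1} \;\ge\; a_{k-1}a_{k+1}, \]
the last inequality holding because both factors exceed $1$ and (using the nonnegativity hypothesis, or else noting the bound is trivial when $a_{k-1}a_{k+1}\le 0$) the product is nonnegative. Degenerate cases—when $a_{k-1}=0$, or a polynomial of smaller-than-expected degree appears along the way—only make things easier, since a real-rooted linear or constant polynomial already satisfies $B^2\ge 0=4AC$.

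For part~$(ii)$, assume $a_0,\dots,a_n>0$ and $a_j^2\ge a_{j-1}a_{j+1}$ for $1\le j\le n-1$. Dividing by $a_ja_{j-1}>0$ gives $a_{j+1}/a_j\le a_j/a_{j-1}$, so the ratios $r_j:=a_{j+1}/a_j$ $(0\le j\le n-1)$ form a non-increasing sequence of positive reals. Let $k$ be the least index in $\{0,\dots,n-1\}$ with $r_k\le 1$, and set $k=n$ if no such index exists. For $1\le i\le k$ we have $i-1<k$, hence $r_{i-1}>1$ by minimality, so $a_{i-1}<a_i$; for $i\ge k$ we have $r_i\le r_k\le 1$ by monotonicity, so $a_{i+1}\le a_i$. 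Therefore $a_{i-1}\le a_i$ for $i\le k$ and $a_i\ge a_{i+1}$ for $i\ge k$, which is exactly unimodality with peak at $k$.

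The only genuinely computational point is the factorial bookkeeping in part~$(i)$; part~$(ii)$ is routine. Since the inequality in $(i)$ is precisely the classical Newton inequality, in the final write-up one may simply cite it (as is already done via the references in the statement) and keep the proof of the lemma to the monotonicity argument for $(ii)$—I would mention the reduction above only as a remark for completeness.
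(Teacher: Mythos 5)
Your proposal is correct, but it is worth noting that the paper does not prove this lemma at all: it is quoted as a known result, with part $(i)$ cited from Br\"and\'en's handbook chapter and part $(ii)$ from Comtet, together with the remark that those sources assume nonnegative coefficients and that this assumption is ``straightforward to remove.'' What you supply is a self-contained proof of both parts: the derivative-plus-reversal reduction of a real-rooted polynomial to a real-rooted quadratic is exactly the classical route to Newton's inequality $a_k^2\ge\frac{(k+1)(n-k+1)}{k(n-k)}a_{k-1}a_{k+1}$, your factorial bookkeeping for $A$, $B$, $C$ checks out, and your handling of the sign issue (the bound is trivial when $a_{k-1}a_{k+1}\le 0$, and the factor exceeding $1$ settles the case $a_{k-1}a_{k+1}>0$) is precisely the ``straightforward'' removal of the nonnegativity hypothesis that the paper waves at without writing down; the degenerate cases (vanishing $a_{k-1}$, drop in degree under reversal) are also correctly dismissed, since they only produce a linear or constant real-rooted polynomial for which the discriminant inequality is vacuous. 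Your part $(ii)$ — log-concavity of a positive sequence forces the consecutive ratios $a_{j+1}/a_j$ to be non-increasing, and the first index where the ratio drops to at most $1$ is a peak — is the standard argument and matches the definition of unimodality used in the paper. So the difference is one of economy versus completeness: the paper buys brevity by citation, while your write-up makes the lemma verifiable in place, including the one point (arbitrary signs in $(i)$) where the cited statements do not literally apply; your closing suggestion to cite for $(i)$ and keep only the short argument for $(ii)$ would in fact reproduce the paper's choice.
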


\section{On $\ell_1$-embeddability of metric spaces}\label{sec:emb}


Metric spaces whose distance matrix has exactly one positive eigenvalue have received much attention since the work of Deza and Laurent~\cite{DLbook}. Metric properties of regular graphs have also been investigated by Koolen \cite{Koolenmscthesis}.

In this section we develop a general theory regarding the eigenvalues of metric spaces. The main result of this section, Theorem~\ref{thmfinal}, will be used to show the unimodality part of Question \ref{que:coefficientsblock}.

	
We say that a metric space $(X,d)$ is {\it of negative type} if for all weight functions $w:X\to\mathbb{Z}$ with $\sum\limits_{x\in X} w(x)=0$ we have 
\begin{equation}\label{neg-type-ineq}
\sum\limits_{x\in X}\sum\limits_{y\in X}w(x)w(y)d(x,y)\leq0.
\end{equation} 
If the same inequality holds for all weight functions with $\sum\limits_{x\in X} w(x)=1$, then we say that $(X,d)$ is {\it hypermetric}. It is fairly easy to show that $(X,d)$ being hypermetric implies it is of negative type. An alternative way to define metric spaces of negative type is by using Schoenberg's Theorem~\cite{Sch38}: a metric space $(X,d)$ is of negative type if and only if $(X,\sqrt{d})$ is isometrically embeddable in the Eucledian space. 
	
A metric space $(X,d)$ is said to be \textit{$\ell_1$-embeddable} if it can be embedded isometrically into the $\ell_{1}$-space ($\mathbb{R}^m, d_{\ell_{1}}$) for some integer $m\geq1$. Here, $d_{\ell_{1}}$ denotes the $\ell_{1}$-distance defined by
\begin{equation*}
d_{\ell_1}(x,y):=\sum\limits_{1\leq i\leq m}|x_i-y_i|~~\textrm{for}~
x,y\in \mathbb{R}^m.
\end{equation*}
One of the basic results of $\ell_{1}$-embeddable metric spaces is a characterization in terms of  cut semimetrics. Given a subset $S$ of the $n$-set $V_n:= \{1,\ldots,n\}$, the \emph{cut semimetric} $\delta_S$ is the distance on $V_n$ defined as
\begin{equation*}
\delta_S(i,j)=\left\{
\begin{array}{ll}
1, & \hbox{$i\in S,~j\in V_n\setminus S$,} \\
0, & \hbox{$i, j \in S$, \textrm{or} $i,j\in V_n\setminus S$.}
\end{array}
\right.
\end{equation*}
Note that every cut semimetric is clearly $\ell_{1}$-embeddable. In fact, a distance $d$ is $\ell_{1}$-\emph{embeddable} if and only if it can be decomposed as a nonnegative linear combination of cut semimetrics. We also note that $\ell_1$-embeddability of $(X,d)$ implies it is hypermetric~\cite{TD1987,Kelly67}.
	
	
Let $G$ be a graph and let $(X,d)$ be the graphic metric space and associated with~$G$. 
We have the following implications:
 
\begin{gather*}
(X,d) \text{ is $\ell_1$-embeddable} \\
		\Downarrow \\
(X,d) \text{ is hypermetric} \\
		\Downarrow \\
(X,d) \text{ is of negative type} \\
		\Downarrow \\
\text{The distance matrix of $G$ has exactly one positive eigenvalue}
\end{gather*}

\medskip

For more details on the metric hierarchy we refer the reader to the book by Deza and Laurent \cite{DLbook}.

Let ($X_1,d_1$) and ($X_2,d_2$) be two metric spaces. Their direct product is the metric space $(X_1\times X_2, d_1\otimes d_2)$ where, for $x_1,y_1\in X_1$, $x_2,y_2\in X_2$,
\begin{equation}\label{product-metric}
d_1\otimes d_2\big((x_1,x_2),(y_1,y_2)\big)=d_1(x_1,y_1)+d_2(x_2,y_2).
\end{equation}
Note that for path metrics, the direct product operation corresponds to the Cartesian product of graphs. Namely, if $G$ and $H$ are two connected graphs, then the direct product of their path metrics coincides with the path metric of the Cartesian product of $G$ and $H$. The following lemmas provide a relation between the metric hierarchy and the direct product of respective metric spaces.
\begin{lema}\label{l1-embed-spaces}
Let $d_i$ be a distance on the set $X_i$, for $i=1,2$. Then $(X_1\times X_2, d_1\otimes d_2)$ is $\ell_{1}$-embeddable if and only if both $(X_1,d_1)$ and $(X_2,d_2)$ are $\ell_{1}$-embeddable.
\end{lema}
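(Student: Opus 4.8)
The plan is to prove the two directions separately, exploiting the characterization of $\ell_1$-embeddability in terms of cut semimetrics recalled just above, together with the fact that $\ell_1$-embeddability is equivalent to embeddability into $(\mathbb{R}^m, d_{\ell_1})$ for some $m$, and that $\ell_1$-distances in $\mathbb{R}^{m_1}$ and $\mathbb{R}^{m_2}$ combine into an $\ell_1$-distance in $\mathbb{R}^{m_1+m_2}$ by concatenation of coordinates.

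For the ``if'' direction, suppose $\sigma_i : X_i \to \mathbb{R}^{m_i}$ are isometric embeddings for $i=1,2$, so that $d_{\ell_1}(\sigma_i(x),\sigma_i(y)) = d_i(x,y)$. Define $\sigma : X_1 \times X_2 \to \mathbb{R}^{m_1+m_2}$ by $\sigma(x_1,x_2) = (\sigma_1(x_1), \sigma_2(x_2))$, i.e.\ the concatenation of the two coordinate vectors. Then for any $(x_1,x_2),(y_1,y_2) \in X_1 \times X_2$ we compute
\begin{equation*}
d_{\ell_1}\big(\sigma(x_1,x_2),\sigma(y_1,y_2)\big) = d_{\ell_1}(\sigma_1(x_1),\sigma_1(y_1)) + d_{\ell_1}(\sigma_2(x_2),\sigma_2(y_2)) = d_1(x_1,y_1) + d_2(x_2,y_2),
\end{equation*}
which by \eqref{product-metric} equals $(d_1 \otimes d_2)\big((x_1,x_2),(y_1,y_2)\big)$. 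Hence $\sigma$ is an isometric embedding and $(X_1 \times X_2, d_1 \otimes d_2)$ is $\ell_1$-embeddable.

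For the ``only if'' direction, assume $(X_1 \times X_2, d_1 \otimes d_2)$ is $\ell_1$-embeddable; I must recover embeddings of each factor. The clean way is to fix a basepoint $x_2^0 \in X_2$ and restrict a given embedding $\sigma$ of the product to the ``slice'' $X_1 \times \{x_2^0\}$: since $(d_1 \otimes d_2)\big((x_1,x_2^0),(y_1,x_2^0)\big) = d_1(x_1,y_1)$, the map $x_1 \mapsto \sigma(x_1,x_2^0)$ is already an isometric embedding of $(X_1,d_1)$ into $\mathbb{R}^m$ with the $\ell_1$ metric, and symmetrically for $X_2$. This shows both factors are $\ell_1$-embeddable and completes the proof. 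The argument is essentially routine; the only point requiring a little care is the slicing step, where one uses that the product metric restricted to a slice recovers exactly the factor metric — this is immediate from \eqref{product-metric} and the fact that a metric is nonnegative and vanishes only on the diagonal, so $d_2(x_2^0,x_2^0)=0$. No genuine obstacle arises here; the lemma is a structural compatibility statement and both implications follow directly from the definitions.
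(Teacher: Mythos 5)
Your proof is correct, but it takes a different route from the paper's. For the ``if'' direction the paper stays inside the cut-semimetric framework set up just before the lemma: writing $d_1=\sum_{S\subseteq X_1}a_S\delta_S$ and $d_2=\sum_{T\subseteq X_2}b_T\delta_T$, it observes via \eqref{product-metric} that $d_1\otimes d_2=\sum_S a_S\delta_{S\times X_2}+\sum_T b_T\delta_{X_1\times T}$, which is again a nonnegative combination of cut semimetrics and hence $\ell_1$-embeddable. You instead work directly with isometric embeddings $\sigma_i:X_i\to(\mathbb{R}^{m_i},d_{\ell_1})$ and concatenate coordinates into $\mathbb{R}^{m_1+m_2}$; this is equally valid and arguably more elementary, since it uses only the definition of the $\ell_1$-distance, whereas the paper's argument relies on the (quoted but nontrivial) equivalence between $\ell_1$-embeddability and decomposability into cut semimetrics. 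The paper's choice has the advantage of producing an explicit cut decomposition of the product metric and of running structurally parallel to the proof of Lemma~\ref{l2-embed-spaces}, which handles the hypermetric and negative-type cases by an analogous weight-splitting argument. Note also that the paper's written proof only establishes the ``if'' direction (the one actually used in Theorem~\ref{thmfinal}); your slicing argument for the ``only if'' direction, restricting an embedding of the product to $X_1\times\{x_2^0\}$ and using $(d_1\otimes d_2)\bigl((x_1,x_2^0),(y_1,x_2^0)\bigr)=d_1(x_1,y_1)$, is a correct and welcome completion of the stated equivalence.
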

\begin{proof}
		
Since both $d_1$ and $d_2$ are $\ell_{1}$-embeddable, they can be decomposed as a nonnegative linear combination of cut semimetrics. Therefore, if $d_1=\sum\limits_{S\subseteq X_1}a_{S}\delta_{S}$ and $d_2=\sum\limits_{T\subseteq X_2}b_{T}\delta_{T}$, then by (\ref{product-metric}) we obtain that
\begin{equation*}
d_1\otimes d_2=\sum\limits_{S\subseteq X_1}a_{S}\delta_{S\times X_2}+\sum\limits_{T\subseteq X_2}b_{T}\delta_{X_1\times T}.
\end{equation*}
This implies that $d_1\otimes d_2$ is decomposed into nonnegative linear combination of cut semimetrics. Thus $d_1\otimes d_2$ is $\ell_{1}$-embeddable.
\qedhere
\end{proof}
	
\begin{lema}\label{l2-embed-spaces}
Let $d_i$ be a distance on the set $X_i$, for $i=1,2$. Then $(X_1\times X_2, d_1\otimes d_2)$ is hypermetric (resp. of negative type) if and only if both ($X_1,d_1$) and ($X_2,d_2$) are hypermetric (resp. of negative type).
\end{lema}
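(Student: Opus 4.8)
The plan is to work directly from the definitions of negative type and hypermetric via weight functions, using the fact that the product distance in \eqref{product-metric} is additive in the two coordinates. The first step is to record the key algebraic identity: for any weight function $w:X_1\times X_2\to\mathbb{Z}$, writing $w_1(x_1):=\sum_{x_2\in X_2}w(x_1,x_2)$ and $w_2(x_2):=\sum_{x_1\in X_1}w(x_1,x_2)$ for the two marginals, one has
\begin{align*}
&\sum_{(x_1,x_2)}\sum_{(y_1,y_2)}w(x_1,x_2)\,w(y_1,y_2)\,(d_1\otimes d_2)\big((x_1,x_2),(y_1,y_2)\big)\\
&\qquad=\sum_{x_1,y_1}w_1(x_1)w_1(y_1)\,d_1(x_1,y_1)\;+\;\sum_{x_2,y_2}w_2(x_2)w_2(y_2)\,d_2(x_2,y_2),
\end{align*}
obtained by substituting \eqref{product-metric} and then summing out the unused coordinate in each of the two resulting double sums. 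Two bookkeeping facts accompany this: the marginals $w_1,w_2$ are again integer-valued, and $\sum_{x_1}w_1(x_1)=\sum_{x_2}w_2(x_2)=\sum_{(x_1,x_2)}w(x_1,x_2)$.

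For the ``if'' direction I would take an integer weight function $w$ on $X_1\times X_2$ with $\sum w=0$ (for negative type) or $\sum w=1$ (for hypermetric). By the bookkeeping facts the marginals $w_1,w_2$ are integer weight functions on $X_1$ and $X_2$ with the same total weight, so each of the two sums on the right-hand side of the identity above is $\le 0$ by the hypothesis on the respective factor; hence their sum, i.e.\ the left-hand side, is $\le 0$, which is precisely the required inequality for the product.

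For the ``only if'' direction I would fix a basepoint $x_2^\ast\in X_2$ (possible since $X_2\neq\emptyset$) and, given an integer weight function $w_1$ on $X_1$ with $\sum w_1=0$ (resp.\ $=1$), extend it to $w$ on $X_1\times X_2$ by setting $w(\cdot,x_2^\ast)=w_1(\cdot)$ and $w\equiv 0$ off the slice $X_1\times\{x_2^\ast\}$; then $w$ is an integer weight function with the same total weight. In the quadratic form for $w$ only the terms with $x_2=y_2=x_2^\ast$ survive, and for those $d_2(x_2,y_2)=0$, so the product inequality for $w$ collapses to $\sum_{x_1,y_1}w_1(x_1)w_1(y_1)d_1(x_1,y_1)\le 0$. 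This shows $(X_1,d_1)$ has the property, and the argument for $(X_2,d_2)$ is symmetric.

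I do not expect a genuine obstacle here: the entire proof is the additive splitting identity plus two elementary manipulations (passing to marginals in one direction, restricting to a slice in the other). The only points needing care are keeping track that marginals of integer weight functions remain integer-valued — which is immediate — and noting that the computation is uniform in the value of the total weight, so that the negative-type case ($\sum w=0$) and the hypermetric case ($\sum w=1$) are handled by literally the same argument. One could alternatively phrase the ``only if'' direction as: each factor embeds isometrically onto a slice of the product, and both properties pass to isometric subspaces; but the direct computation is just as short.
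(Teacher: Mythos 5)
Your proof is correct, and the ``if'' direction is essentially identical to the paper's argument: substitute the additive formula for $d_1\otimes d_2$, split the quadratic form into the two marginal sums, observe that the marginals $w_1,w_2$ are integer weight functions with the same total weight as $w$, and apply the hypothesis on each factor. The only difference is that you also prove the ``only if'' direction (by placing a weight function on the slice $X_1\times\{x_2^\ast\}$, where the quadratic form collapses since $d_2(x_2^\ast,x_2^\ast)=0$), whereas the paper's proof states that it suffices to verify the forward implication and leaves the converse unproved; your slice argument is a correct and natural way to fill that in, and it matches the standard observation that these properties pass to isometric subspaces.
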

\begin{proof}
Similar to Lemma~\ref{l1-embed-spaces}, it suffices to show that the inequality~(\ref{neg-type-ineq}) holds for $d_1\otimes d_2$ assuming $(X,d_1)$ and $(X,d_2)$ are hypermetric (resp. of negative type). By definition we have
\begin{align*}
&\sum\limits_{(x_1,x_2)\in X_1\times X_2}\sum\limits_{(y_1,y_2)\in X_1\times X_2} w(x_1,x_2)w(y_1,y_2)(d_1\otimes d_2)\left((x_1,x_2),(y_1,y_2)\right) = \\
&= \sum\limits_{x_1\in X_1}\sum\limits_{x_2\in X_2}\sum\limits_{y_1\in X_1}\sum\limits_{y_2\in X_2} w(x_1,x_2)w(y_1,y_2)\left(d_1(x_1,y_1)+d_2(x_2,y_2)\right) = \\
&= \sum\limits_{x_1\in X_1}\sum\limits_{y_1\in X_1}\left(\sum\limits_{x_2\in X_2}w(x_1,x_2)\right)\left(\sum\limits_{y_2\in X_2}w(y_1,y_2)\right)d_1(x_1,y_1)+ \\
&+ \sum\limits_{x_2\in X_2}\sum\limits_{y_2\in X_2}\left(\sum\limits_{x_1\in X_1}w(x_1,x_2)\right)\left(\sum\limits_{y_1\in X_1}w(y_1,y_2)\right)d_2(x_2,y_2).
\end{align*}
For a weight function $w:X_1\times X_2\to \mathbb{Z}$ such that 

$$\sum\limits_{(x_1,x_2)\in X_1\times X_2} w(x_1,x_2)=1\text{ (resp. $0$),}$$  we define $w_1:X_1\to\mathbb{Z}$ and $w_2:X_2\to\mathbb{Z}$ such that 
$$w_1(x)=\sum\limits_{x_2\in X_2} w(x,x_2)$$ and $$w_2(x)=\sum\limits_{x_1\in X_1} w(x_1,x).$$ Note that $$\sum\limits_{x_1\in X_1} w_1(x_1)=\sum\limits_{(x_1,x_2)\in X_1\times X_2}w(x_1,x_2)=1\text{ (resp. $0$),}$$ and thus for $(X_1,d_1)$ being hypermetric (resp. of negative type) and the weight function $w_1$ we have $$\sum\limits_{x_1\in X_1}\sum\limits_{y_1\in X_1} w_1(x_1) w_1(y_1)d_1(x_1,y_1)\leq 0.$$ Similarly, we have $$\sum\limits_{x_2\in X_2}\sum\limits_{y_2\in X_2} w_2(x_2)w_2(y_2)d_2(x_2,y_2)\leq 0.$$ The inequality~(\ref{neg-type-ineq}) for $d_1\otimes d_2$ then follows.
\end{proof}

\begin{thm}\label{thmfinal}
Let $G$ be the a graph whose $2$-connected components are of negative type. If $D(G)$ is the distance matrix of $G$, then $D(G)$ has exactly one positive eigenvalue.
\end{thm}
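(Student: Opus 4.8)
The plan is to reduce the statement about a general block graph (or more generally, a graph whose blocks are of negative type) to the behaviour of the distance matrix of a single block, via the Graham--Hoffman--Hosoya decomposition. The starting point is the observation that ``has exactly one positive eigenvalue'' is precisely the bottom layer of the metric hierarchy displayed above; so it suffices to show that the graphic metric space $(V,d_G)$ is itself of negative type, and then invoke the implication ``of negative type $\Rightarrow$ exactly one positive eigenvalue.'' Thus the real target is: \emph{if every block of $G$ has a graphic metric space of negative type, then $(V,d_G)$ is of negative type.}

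First I would recall (or cite, from Theorem~\ref{GHH} and the surrounding discussion) the classical structural fact that a block graph $G$ is an \emph{isometric} subgraph of the Cartesian product of its blocks: label each block $G_i$ and observe that the path metric of $G$ embeds isometrically into $\bigsquare_i G_i$, because a shortest path in $G$ between two vertices passes through a uniquely determined sequence of cut vertices, and its length is the sum of the distances traversed inside the successive blocks. Equivalently, $d_G$ is the restriction of the product metric $d_1 \otimes \cdots \otimes d_t$ (where $d_i$ is the path metric of block $G_i$) to the image of $V$ under this embedding. This is the step I expect to need the most care: one must check that the ``gluing at cut vertices'' really produces an isometric embedding into the Cartesian product, handling the bookkeeping of which coordinate of the product each block contributes and verifying additivity of distances along the tree of blocks. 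If the paper prefers to avoid the Cartesian-product picture, the same reduction can be done directly by induction on the number of blocks, peeling off a leaf block at a time.

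Next I would assemble the pieces. By hypothesis each block $G_i$ is of negative type, so each $(V(G_i), d_i)$ is of negative type. By Lemma~\ref{l2-embed-spaces} applied inductively over $i = 1,\dots,t$, the direct product $\big(\prod_i V(G_i), \bigotimes_i d_i\big)$ is of negative type. Being of negative type is inherited by subspaces (the defining inequality~(\ref{neg-type-ineq}) only quantifies over weight functions supported on the subset, so any restriction of a negative-type metric to a subset is again of negative type), hence the isometric copy of $(V,d_G)$ sitting inside this product is of negative type, and therefore so is $(V,d_G)$ itself.

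Finally, applying the implication chain recalled above — of negative type $\Rightarrow$ the distance matrix has exactly one positive eigenvalue — yields that $D(G)$ has exactly one positive eigenvalue, completing the proof. The only genuinely nontrivial ingredient is the isometric embedding of a block graph into the Cartesian product of its blocks; everything else is either a direct appeal to Lemma~\ref{l2-embed-spaces}, the trivial observation that negative type passes to subspaces, or the already-displayed metric hierarchy. A remark worth adding is that the same argument, using Lemma~\ref{l1-embed-spaces} (resp. the hypermetric half of Lemma~\ref{l2-embed-spaces}) in place of the negative-type half, shows that if the blocks are $\ell_1$-embeddable (resp. hypermetric) then so is $G$; in particular, since cliques are $\ell_1$-embeddable, every block graph is $\ell_1$-embeddable and hence its distance matrix has exactly one positive eigenvalue — but the weaker hypothesis of Theorem~\ref{thmfinal} is what will be needed for the unimodality application.
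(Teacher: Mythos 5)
Your proposal is correct and follows essentially the same route as the paper: the paper's proof likewise applies Lemma~\ref{l2-embed-spaces} to conclude that the Cartesian product of the $2$-connected components is of negative type, notes that $G$ is an isometric subgraph of that product, and then invokes the metric hierarchy. You merely spell out the details the paper leaves implicit (the isometric embedding via gates/cut vertices, the restriction of negative type to subspaces), which is fine.
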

\begin{proof}
Lemma \ref{l2-embed-spaces} ensures us that the Cartesian product of the $2$-connected components of $G$ is of negative type. Since $G$ is an isometric subgraph of the Cartesian product, the result follows immediately.
\end{proof}

Terwilliger and Deza \cite{TD1987} investigated finite distance spaces having integral distances: a finite set $X$ and a map $d:X^2\rightarrow \mathbb{Z}$. The relation $d=1$ is assumed to be connected.  They provided the following classification of hypermetric spaces and metric spaces of negative type: $(X,d)$ has negative type if and only if it is metrically embeddable in a Euclidean space and generates a root lattice (direct sum of lattices of types $A,D,E$). Moreover, $(X,d)$ is hypermetric if and only if it is isomorphic to a subspace of the (complete) Cartesian products of the half-cubes, the CP-graphs, and the Johnson, Schläfli and Gosset graphs. These graphs correspond to the minimum vectors in the lattices dual to the root lattices mentioned above. Terwilliger and Deza conclude by describing how a given hypermetric $(X,d)$ may be embedded into a complete one. We should note that the results in \cite{TD1987} provide an alternative way to show Theorem~\ref{thmfinal} for hypermetric graphs. 


Next we observe that the Cartesian product of graphs does not preserve the one positive distance eigenvalue property. 

\begin{rem}
The Cartesian product of two graphs having one positive distance eigenvalues does not necessarily have one positive distance eigenvalue; see, e.g., Figure  \ref{fig:2evproduct}. In fact, if a vertex of $C_5$ is identified with any 4-degree vertex in the graph in Figure \ref{fig:2evproduct}, right, then the resulting 11-vertex graph has the following distance spectrum:
$$\{-7.83, -2.62, -2, -2, -1.38, -1, -1.0000, -0.38, -0.35, 0.08, 18.48\}.$$

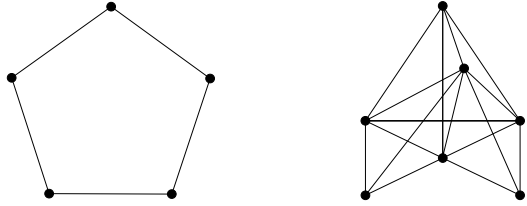
\begin{figure}[ht!]
\centering
\resizebox{7cm}{!}{
\begin{tikzpicture}[x=0.75pt,y=0.75pt,yscale=-1,xscale=1]

\draw   (121.78,141.11) -- (38.49,140.8) -- (13.05,61.5) -- (80.6,12.79) -- (147.8,61.99) -- cycle ;
\draw  [fill={rgb, 255:red, 0; green, 0; blue, 0 }  ,fill opacity=1 ] (35.53,140.8) .. controls (35.53,139.16) and (36.86,137.84) .. (38.49,137.84) .. controls (40.13,137.84) and (41.46,139.16) .. (41.46,140.8) .. controls (41.46,142.44) and (40.13,143.76) .. (38.49,143.76) .. controls (36.86,143.76) and (35.53,142.44) .. (35.53,140.8) -- cycle ;
\draw  [fill={rgb, 255:red, 0; green, 0; blue, 0 }  ,fill opacity=1 ] (118.81,141.11) .. controls (118.81,139.47) and (120.14,138.14) .. (121.78,138.14) .. controls (123.41,138.14) and (124.74,139.47) .. (124.74,141.11) .. controls (124.74,142.74) and (123.41,144.07) .. (121.78,144.07) .. controls (120.14,144.07) and (118.81,142.74) .. (118.81,141.11) -- cycle ;
\draw  [fill={rgb, 255:red, 0; green, 0; blue, 0 }  ,fill opacity=1 ] (10.08,61.5) .. controls (10.08,59.86) and (11.41,58.53) .. (13.05,58.53) .. controls (14.68,58.53) and (16.01,59.86) .. (16.01,61.5) .. controls (16.01,63.14) and (14.68,64.46) .. (13.05,64.46) .. controls (11.41,64.46) and (10.08,63.14) .. (10.08,61.5) -- cycle ;
\draw  [fill={rgb, 255:red, 0; green, 0; blue, 0 }  ,fill opacity=1 ] (144.84,61.99) .. controls (144.84,60.35) and (146.17,59.03) .. (147.8,59.03) .. controls (149.44,59.03) and (150.77,60.35) .. (150.77,61.99) .. controls (150.77,63.63) and (149.44,64.96) .. (147.8,64.96) .. controls (146.17,64.96) and (144.84,63.63) .. (144.84,61.99) -- cycle ;
\draw  [fill={rgb, 255:red, 0; green, 0; blue, 0 }  ,fill opacity=1 ] (77.64,12.79) .. controls (77.64,11.15) and (78.97,9.83) .. (80.6,9.83) .. controls (82.24,9.83) and (83.57,11.15) .. (83.57,12.79) .. controls (83.57,14.43) and (82.24,15.76) .. (80.6,15.76) .. controls (78.97,15.76) and (77.64,14.43) .. (77.64,12.79) -- cycle ;
\draw   (305.74,12.29) -- (358.27,90.93) -- (253.22,90.93) -- cycle ;
\draw   (358.27,141.93) -- (253.22,90.93) -- (358.27,90.93) -- cycle ;
\draw   (253.27,141.93) -- (358.27,90.93) -- (253.27,90.93) -- cycle ;
\draw    (305.74,12.29) -- (305.74,116.43) ;
\draw   (320.27,55.08) -- (305.74,115.58) -- (305.74,12.29) -- cycle ;
\draw    (320.27,55.08) -- (253.22,90.93) ;
\draw    (320.27,55.08) -- (253.27,141.93) ;
\draw    (358.27,90.93) -- (320.27,55.08) ;
\draw    (320.27,55.08) -- (358.27,141.93) ;
\draw  [fill={rgb, 255:red, 0; green, 0; blue, 0 }  ,fill opacity=1 ] (302.78,12.29) .. controls (302.78,10.65) and (304.1,9.33) .. (305.74,9.33) .. controls (307.38,9.33) and (308.71,10.65) .. (308.71,12.29) .. controls (308.71,13.93) and (307.38,15.26) .. (305.74,15.26) .. controls (304.1,15.26) and (302.78,13.93) .. (302.78,12.29) -- cycle ;
\draw  [fill={rgb, 255:red, 0; green, 0; blue, 0 }  ,fill opacity=1 ] (317.3,55.08) .. controls (317.3,53.44) and (318.63,52.11) .. (320.27,52.11) .. controls (321.9,52.11) and (323.23,53.44) .. (323.23,55.08) .. controls (323.23,56.71) and (321.9,58.04) .. (320.27,58.04) .. controls (318.63,58.04) and (317.3,56.71) .. (317.3,55.08) -- cycle ;
\draw  [fill={rgb, 255:red, 0; green, 0; blue, 0 }  ,fill opacity=1 ] (355.3,90.93) .. controls (355.3,89.3) and (356.63,87.97) .. (358.27,87.97) .. controls (359.9,87.97) and (361.23,89.3) .. (361.23,90.93) .. controls (361.23,92.57) and (359.9,93.9) .. (358.27,93.9) .. controls (356.63,93.9) and (355.3,92.57) .. (355.3,90.93) -- cycle ;
\draw  [fill={rgb, 255:red, 0; green, 0; blue, 0 }  ,fill opacity=1 ] (250.25,90.93) .. controls (250.25,89.3) and (251.58,87.97) .. (253.22,87.97) .. controls (254.85,87.97) and (256.18,89.3) .. (256.18,90.93) .. controls (256.18,92.57) and (254.85,93.9) .. (253.22,93.9) .. controls (251.58,93.9) and (250.25,92.57) .. (250.25,90.93) -- cycle ;
\draw  [fill={rgb, 255:red, 0; green, 0; blue, 0 }  ,fill opacity=1 ] (302.8,116.43) .. controls (302.8,114.8) and (304.13,113.47) .. (305.77,113.47) .. controls (307.4,113.47) and (308.73,114.8) .. (308.73,116.43) .. controls (308.73,118.07) and (307.4,119.4) .. (305.77,119.4) .. controls (304.13,119.4) and (302.8,118.07) .. (302.8,116.43) -- cycle ;
\draw  [fill={rgb, 255:red, 0; green, 0; blue, 0 }  ,fill opacity=1 ] (355.3,141.93) .. controls (355.3,140.3) and (356.63,138.97) .. (358.27,138.97) .. controls (359.9,138.97) and (361.23,140.3) .. (361.23,141.93) .. controls (361.23,143.57) and (359.9,144.9) .. (358.27,144.9) .. controls (356.63,144.9) and (355.3,143.57) .. (355.3,141.93) -- cycle ;
\draw  [fill={rgb, 255:red, 0; green, 0; blue, 0 }  ,fill opacity=1 ] (250.3,141.93) .. controls (250.3,140.3) and (251.63,138.97) .. (253.27,138.97) .. controls (254.9,138.97) and (256.23,140.3) .. (256.23,141.93) .. controls (256.23,143.57) and (254.9,144.9) .. (253.27,144.9) .. controls (251.63,144.9) and (250.3,143.57) .. (250.3,141.93) -- cycle ;

\end{tikzpicture}

}
\caption{Two graphs whose distance matrices have exactly one positive eigenvalue, but the graph resulting from joining them in one vertex has two positive distance eigenvalues.}
\label{fig:2evproduct}
\end{figure}
Moreover, from Zhang's and Godsil's result \cite[Theorem 3.3]{zg2013} it follows that if the distinguishing one vertex does not preserve the one distance eigenvalues property, then the Cartesian product does not preserve it either.
\end{rem}
 
\begin{rem} If  all the $2$-connected components of a graph $G$ have a full rank distance matrix, it does not necessarily follow that $D(G)$ has full rank. For example, consider the graph in Figure~\ref{fig:fullrank} with two biconnected components: $G'$ on vertices $\{0,1,\dots,7\}$ and $G''$ on vertices $\{0,8,9\}$. The distance matrices of $G$ and  $G'$ are:

{\tiny{
	\[
	D(G)=\left(
	\begin{array}{cccccccccc}
		0 & 1 & 2 & 1 & 2 & 2 & 2 & 2 & 1 & 1 \\
		1 & 0 & 1 & 1 & 2 & 2 & 2 & 2 & 2 & 2 \\
		2 & 1 & 0 & 1 & 1 & 1 & 1 & 1 & 3 & 3 \\
		1 & 1 & 1 & 0 & 1 & 1 & 1 & 1 & 2 & 2 \\
		2 & 2 & 1 & 1 & 0 & 1 & 2 & 2 & 3 & 3 \\
		2 & 2 & 1 & 1 & 1 & 0 & 2 & 2 & 3 & 3 \\
		2 & 2 & 1 & 1 & 2 & 2 & 0 & 2 & 3 & 3 \\
		2 & 2 & 1 & 1 & 2 & 2 & 2 & 0 & 3 & 3 \\
		1 & 2 & 3 & 2 & 3 & 3 & 3 & 3 & 0 & 1 \\
		1 & 2 & 3 & 2 & 3 & 3 & 3 & 3 & 1 & 0 \\
	\end{array}
	\right), \;\;\;
	D(G')=\left(
	\begin{array}{cccccccc}
		0 & 1 & 2 & 1 & 2 & 2 & 2 & 2 \\
		1 & 0 & 1 & 1 & 2 & 2 & 2 & 2 \\
		2 & 1 & 0 & 1 & 1 & 1 & 1 & 1 \\
		1 & 1 & 1 & 0 & 1 & 1 & 1 & 1 \\
		2 & 2 & 1 & 1 & 0 & 1 & 2 & 2 \\
		2 & 2 & 1 & 1 & 1 & 0 & 2 & 2 \\
		2 & 2 & 1 & 1 & 2 & 2 & 0 & 2 \\
		2 & 2 & 1 & 1 & 2 & 2 & 2 & 0 \\
	\end{array}
	\right),
	\]
	}}
and the ranks of $D(G')$ and $D(G'')$ are $8$ and $3$ respectively, so they are full rank matrices, whereas the rank of $D(G)$ is $9$, so it is not a full-rank matrix.
\end{rem}

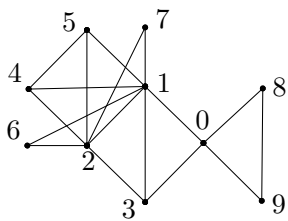
\begin{figure}[ht!]
	\centering
	\resizebox{4cm}{!}{
	\begin{tikzpicture}[x=0.75pt,y=0.75pt,yscale=-1,xscale=1]
		
		\draw    (141.87,52.67) -- (111.33,81) ;
		\draw [shift={(111.33,81)}, rotate = 137.14] [color={rgb, 255:red, 0; green, 0; blue, 0 }  ][fill={rgb, 255:red, 0; green, 0; blue, 0 }  ][line width=0.75]      (0, 0) circle [x radius= 1, y radius= 1]   ;
		\draw [shift={(141.87,52.67)}, rotate = 137.14] [color={rgb, 255:red, 0; green, 0; blue, 0 }  ][fill={rgb, 255:red, 0; green, 0; blue, 0 }  ][line width=0.75]      (0, 0) circle [x radius= 1, y radius= 1]   ;
		\draw    (111.33,81) -- (141.33,111) ;
		\draw [shift={(141.33,111)}, rotate = 45] [color={rgb, 255:red, 0; green, 0; blue, 0 }  ][fill={rgb, 255:red, 0; green, 0; blue, 0 }  ][line width=0.75]      (0, 0) circle [x radius= 1, y radius= 1]   ;
		\draw [shift={(111.33,81)}, rotate = 45] [color={rgb, 255:red, 0; green, 0; blue, 0 }  ][fill={rgb, 255:red, 0; green, 0; blue, 0 }  ][line width=0.75]      (0, 0) circle [x radius= 1, y radius= 1]   ;
		\draw    (141.87,52.67) -- (141.33,111) ;
		\draw [shift={(141.33,111)}, rotate = 90.52] [color={rgb, 255:red, 0; green, 0; blue, 0 }  ][fill={rgb, 255:red, 0; green, 0; blue, 0 }  ][line width=0.75]      (0, 0) circle [x radius= 1, y radius= 1]   ;
		\draw [shift={(141.87,52.67)}, rotate = 90.52] [color={rgb, 255:red, 0; green, 0; blue, 0 }  ][fill={rgb, 255:red, 0; green, 0; blue, 0 }  ][line width=0.75]      (0, 0) circle [x radius= 1, y radius= 1]   ;
		\draw    (111.33,81) -- (81.33,51.67) ;
		\draw [shift={(81.33,51.67)}, rotate = 224.36] [color={rgb, 255:red, 0; green, 0; blue, 0 }  ][fill={rgb, 255:red, 0; green, 0; blue, 0 }  ][line width=0.75]      (0, 0) circle [x radius= 1, y radius= 1]   ;
		\draw [shift={(111.33,81)}, rotate = 224.36] [color={rgb, 255:red, 0; green, 0; blue, 0 }  ][fill={rgb, 255:red, 0; green, 0; blue, 0 }  ][line width=0.75]      (0, 0) circle [x radius= 1, y radius= 1]   ;
		\draw    (81.33,111.67) -- (81.33,51.67) ;
		\draw [shift={(81.33,51.67)}, rotate = 270] [color={rgb, 255:red, 0; green, 0; blue, 0 }  ][fill={rgb, 255:red, 0; green, 0; blue, 0 }  ][line width=0.75]      (0, 0) circle [x radius= 1, y radius= 1]   ;
		\draw [shift={(81.33,111.67)}, rotate = 270] [color={rgb, 255:red, 0; green, 0; blue, 0 }  ][fill={rgb, 255:red, 0; green, 0; blue, 0 }  ][line width=0.75]      (0, 0) circle [x radius= 1, y radius= 1]   ;
		\draw    (111.33,81) -- (81.33,111.67) ;
		\draw [shift={(81.33,111.67)}, rotate = 134.37] [color={rgb, 255:red, 0; green, 0; blue, 0 }  ][fill={rgb, 255:red, 0; green, 0; blue, 0 }  ][line width=0.75]      (0, 0) circle [x radius= 1, y radius= 1]   ;
		\draw [shift={(111.33,81)}, rotate = 134.37] [color={rgb, 255:red, 0; green, 0; blue, 0 }  ][fill={rgb, 255:red, 0; green, 0; blue, 0 }  ][line width=0.75]      (0, 0) circle [x radius= 1, y radius= 1]   ;
		\draw    (81.33,51.67) -- (51.33,82.33) ;
		\draw [shift={(51.33,82.33)}, rotate = 134.37] [color={rgb, 255:red, 0; green, 0; blue, 0 }  ][fill={rgb, 255:red, 0; green, 0; blue, 0 }  ][line width=0.75]      (0, 0) circle [x radius= 1, y radius= 1]   ;
		\draw [shift={(81.33,51.67)}, rotate = 134.37] [color={rgb, 255:red, 0; green, 0; blue, 0 }  ][fill={rgb, 255:red, 0; green, 0; blue, 0 }  ][line width=0.75]      (0, 0) circle [x radius= 1, y radius= 1]   ;
		\draw    (81.33,111.67) -- (51.33,82.33) ;
		\draw [shift={(51.33,82.33)}, rotate = 224.36] [color={rgb, 255:red, 0; green, 0; blue, 0 }  ][fill={rgb, 255:red, 0; green, 0; blue, 0 }  ][line width=0.75]      (0, 0) circle [x radius= 1, y radius= 1]   ;
		\draw [shift={(81.33,111.67)}, rotate = 224.36] [color={rgb, 255:red, 0; green, 0; blue, 0 }  ][fill={rgb, 255:red, 0; green, 0; blue, 0 }  ][line width=0.75]      (0, 0) circle [x radius= 1, y radius= 1]   ;
		\draw    (81.33,51.67) -- (51.33,22.33) ;
		\draw [shift={(51.33,22.33)}, rotate = 224.36] [color={rgb, 255:red, 0; green, 0; blue, 0 }  ][fill={rgb, 255:red, 0; green, 0; blue, 0 }  ][line width=0.75]      (0, 0) circle [x radius= 1, y radius= 1]   ;
		\draw [shift={(81.33,51.67)}, rotate = 224.36] [color={rgb, 255:red, 0; green, 0; blue, 0 }  ][fill={rgb, 255:red, 0; green, 0; blue, 0 }  ][line width=0.75]      (0, 0) circle [x radius= 1, y radius= 1]   ;
		\draw    (51.33,82.33) -- (51.33,22.33) ;
		\draw [shift={(51.33,22.33)}, rotate = 270] [color={rgb, 255:red, 0; green, 0; blue, 0 }  ][fill={rgb, 255:red, 0; green, 0; blue, 0 }  ][line width=0.75]      (0, 0) circle [x radius= 1, y radius= 1]   ;
		\draw [shift={(51.33,82.33)}, rotate = 270] [color={rgb, 255:red, 0; green, 0; blue, 0 }  ][fill={rgb, 255:red, 0; green, 0; blue, 0 }  ][line width=0.75]      (0, 0) circle [x radius= 1, y radius= 1]   ;
		\draw    (51.33,22.33) -- (21.33,53) ;
		\draw [shift={(21.33,53)}, rotate = 134.37] [color={rgb, 255:red, 0; green, 0; blue, 0 }  ][fill={rgb, 255:red, 0; green, 0; blue, 0 }  ][line width=0.75]      (0, 0) circle [x radius= 1, y radius= 1]   ;
		\draw [shift={(51.33,22.33)}, rotate = 134.37] [color={rgb, 255:red, 0; green, 0; blue, 0 }  ][fill={rgb, 255:red, 0; green, 0; blue, 0 }  ][line width=0.75]      (0, 0) circle [x radius= 1, y radius= 1]   ;
		\draw    (51.33,82.33) -- (21.33,53) ;
		\draw [shift={(21.33,53)}, rotate = 224.36] [color={rgb, 255:red, 0; green, 0; blue, 0 }  ][fill={rgb, 255:red, 0; green, 0; blue, 0 }  ][line width=0.75]      (0, 0) circle [x radius= 1, y radius= 1]   ;
		\draw [shift={(51.33,82.33)}, rotate = 224.36] [color={rgb, 255:red, 0; green, 0; blue, 0 }  ][fill={rgb, 255:red, 0; green, 0; blue, 0 }  ][line width=0.75]      (0, 0) circle [x radius= 1, y radius= 1]   ;
		\draw    (81.33,51.67) -- (21.33,53) ;
		\draw [shift={(21.33,53)}, rotate = 178.73] [color={rgb, 255:red, 0; green, 0; blue, 0 }  ][fill={rgb, 255:red, 0; green, 0; blue, 0 }  ][line width=0.75]      (0, 0) circle [x radius= 1, y radius= 1]   ;
		\draw [shift={(81.33,51.67)}, rotate = 178.73] [color={rgb, 255:red, 0; green, 0; blue, 0 }  ][fill={rgb, 255:red, 0; green, 0; blue, 0 }  ][line width=0.75]      (0, 0) circle [x radius= 1, y radius= 1]   ;
		\draw    (20.67,82.33) -- (51.33,82.33) ;
		\draw [shift={(51.33,82.33)}, rotate = 0] [color={rgb, 255:red, 0; green, 0; blue, 0 }  ][fill={rgb, 255:red, 0; green, 0; blue, 0 }  ][line width=0.75]      (0, 0) circle [x radius= 1, y radius= 1]   ;
		\draw [shift={(20.67,82.33)}, rotate = 0] [color={rgb, 255:red, 0; green, 0; blue, 0 }  ][fill={rgb, 255:red, 0; green, 0; blue, 0 }  ][line width=0.75]      (0, 0) circle [x radius= 1, y radius= 1]   ;
		\draw    (20.67,82.33) -- (81.33,51.67) ;
		\draw [shift={(81.33,51.67)}, rotate = 333.18] [color={rgb, 255:red, 0; green, 0; blue, 0 }  ][fill={rgb, 255:red, 0; green, 0; blue, 0 }  ][line width=0.75]      (0, 0) circle [x radius= 1, y radius= 1]   ;
		\draw [shift={(20.67,82.33)}, rotate = 333.18] [color={rgb, 255:red, 0; green, 0; blue, 0 }  ][fill={rgb, 255:red, 0; green, 0; blue, 0 }  ][line width=0.75]      (0, 0) circle [x radius= 1, y radius= 1]   ;
		\draw    (81.33,21) -- (51.33,82.33) ;
		\draw [shift={(51.33,82.33)}, rotate = 116.06] [color={rgb, 255:red, 0; green, 0; blue, 0 }  ][fill={rgb, 255:red, 0; green, 0; blue, 0 }  ][line width=0.75]      (0, 0) circle [x radius= 1, y radius= 1]   ;
		\draw [shift={(81.33,21)}, rotate = 116.06] [color={rgb, 255:red, 0; green, 0; blue, 0 }  ][fill={rgb, 255:red, 0; green, 0; blue, 0 }  ][line width=0.75]      (0, 0) circle [x radius= 1, y radius= 1]   ;
		\draw    (81.33,21) -- (81.33,51.67) ;
		\draw [shift={(81.33,51.67)}, rotate = 90] [color={rgb, 255:red, 0; green, 0; blue, 0 }  ][fill={rgb, 255:red, 0; green, 0; blue, 0 }  ][line width=0.75]      (0, 0) circle [x radius= 1, y radius= 1]   ;
		\draw [shift={(81.33,21)}, rotate = 90] [color={rgb, 255:red, 0; green, 0; blue, 0 }  ][fill={rgb, 255:red, 0; green, 0; blue, 0 }  ][line width=0.75]      (0, 0) circle [x radius= 1, y radius= 1]   ;

		\draw (106,63.07) node [anchor=north west][inner sep=0.75pt] {$0$};
		\draw (86,43.73) node [anchor=north west][inner sep=0.75pt]  {$1$};
		\draw (47.33,84.4) node [anchor=north west][inner sep=0.75pt] {$2$};
		\draw (68,109.07) node [anchor=north west][inner sep=0.75pt]  {$3$};
		\draw (9.33,38.4) node [anchor=north west][inner sep=0.75pt]  {$4$};
		\draw (37.33,12.4) node [anchor=north west][inner sep=0.75pt]  {$5$};
		\draw (8.67,68.4) node [anchor=north west][inner sep=0.75pt]  {$6$};
		\draw (85.33,10.4) node [anchor=north west][inner sep=0.75pt]  {$7$};
		\draw (145.67,45.73) node [anchor=north west][inner sep=0.75pt]  {$8$};
		\draw (145.33,106.4) node [anchor=north west][inner sep=0.75pt]  {$9$};

	\end{tikzpicture}
	}
	\caption{A graph $G$ whose biconnected components have full rank distance matrices but its distance matrix $D(G)$ does not have full rank.}
	\label{fig:fullrank}
\end{figure}

\section{Extension of Graham and Lov\'{a}sz conjecture to block graphs}\label{sec:conjectureforblockgraphs}

Recall that, for a graph $G$, $c_k(G)$ denotes the coefficient of $x^k$ in $$\text{det}(D(G)-xI)=(-1)^np_{D(G)}(x).$$ The coefficients of the distance polynomial of a tree all have a common factor of $(-1)^{n-1}2^{n-k-2}$ due to a result of Graham and Lov\'{a}sz~\cite{GL78}. We say that the quantities $d_k(T)=(-1)^{n-1}c_k(T)/2^{n-k-2}$ are the \emph{normalized coefficients} of the distance characteristic polynomial of a tree $T$. Due to such common factor in  the coefficients of trees \cite{EGG76}, Conjecture~\ref{con:coefficientstree} uses the normalized coefficients. We should note that we will not use the normalized coefficients to investigate Question \ref{que:coefficientsblock}.

In this section we show that the sequence of coefficients of the distance characteristic polynomial of a block graph is unimodal, and we establish the peak for several extremal classes of block graphs with small diameter. 

\subsection{Unimodality}\label{sec:unimodality}

To answer the unimodality part of Question \ref{que:coefficientsblock}, we will follow a similar approach as it was done in \cite{GRWC2015} to show the unimodality of trees. The main idea relies on the fact that the distance matrix of a block graph on $n$ vertices has one positive and $n-1$ negative eigenvalues. We begin with a preliminary result, which extends the known result for trees by Edelberg, Garey and Graham \cite{EGG76}. 

\begin{lema}\label{claim}
The coefficients of the distance characteristic polynomial of a block graph $G$ satisfy
$$(-1)^{n-1}c_{k}(G)>0 \quad \text{for }0\leq k \leq n-2.$$ 
\end{lema}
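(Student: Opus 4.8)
The plan is to combine the sign information we already have about the \emph{eigenvalues} of $D(G)$ with the way the coefficients of the characteristic polynomial are expressed in terms of those eigenvalues. By Theorem~\ref{thmfinal}, since the $2$-connected components of a block graph are cliques, which are $\ell_1$-embeddable (indeed $K_t$ embeds isometrically in the Hamming graph / hypercube) and hence of negative type, the distance matrix $D(G)$ has exactly one positive eigenvalue. I first want to upgrade this to: $D(G)$ is nonsingular with exactly one positive and $n-1$ negative eigenvalues. Nonsingularity follows from the Graham--Hoffman--Hosoya formula (Theorem~\ref{GHH}): $\det D(G)=\sum_i \det D(G_i)\prod_{j\ne i}\operatorname{cof} D(G_j)$, and for a clique $K_t$ one has $\det D(K_t)=(-1)^{t-1}(t-1)$ and $\operatorname{cof} D(K_t)=(-1)^{t-1}t$, all nonzero of a fixed sign; a short computation shows the sum is nonzero (all summands have the same sign). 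Together with "exactly one positive eigenvalue'' this forces the spectrum to be $\mu_1>0>\mu_2\ge\cdots\ge\mu_n$.

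Next I would write $\det(D(G)-xI)=\prod_{i=1}^n(\mu_i-x)$ and expand: $(-1)^n p_{D(G)}(x)=\det(D(G)-xI)=\sum_{k=0}^n c_k(G)x^k$, so that
\[
c_k(G)=(-1)^{n-k}\,e_{n-k}(\mu_1,\dots,\mu_n),
\]
where $e_m$ is the $m$-th elementary symmetric polynomial. Now set $\nu_i=-\mu_i>0$ for $i=2,\dots,n$ and keep $\mu_1>0$ separately. Then $e_{n-k}(\mu_1,\mu_2,\dots,\mu_n)=e_{n-k}(\mu_1,-\nu_2,\dots,-\nu_n)$. Splitting according to whether the term $\mu_1$ is used,
\[
e_{n-k}(\mu_1,-\nu_2,\dots,-\nu_n)=\mu_1\,e_{n-k-1}(-\nu_2,\dots,-\nu_n)+e_{n-k}(-\nu_2,\dots,-\nu_n),
\]
and since $e_m(-\nu_2,\dots,-\nu_n)=(-1)^m e_m(\nu_2,\dots,\nu_n)$ with all $\nu_i>0$, we get
\[
c_k(G)=(-1)^{n-k}\big[(-1)^{n-k-1}\mu_1 e_{n-k-1}(\nu_2,\dots,\nu_n)+(-1)^{n-k}e_{n-k}(\nu_2,\dots,\nu_n)\big].
\]
Multiplying by $(-1)^{n-1}$ gives
\[
(-1)^{n-1}c_k(G)=\mu_1\,e_{n-k-1}(\nu_2,\dots,\nu_n)-e_{n-k}(\nu_2,\dots,\nu_n)\cdot(-1)^{?}
\]
— here I would be careful with the parity bookkeeping; the point is that after the dust settles the expression becomes $\mu_1 e_{n-k-1}(\nu_2,\dots,\nu_n)+e_{n-k}(\nu_2,\dots,\nu_n)$ up to an overall sign that is positive, a sum of a product of a positive number with a nonnegative elementary symmetric function of positive reals plus another nonnegative such function. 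For $0\le k\le n-2$ the index $n-k-1$ ranges in $\{1,\dots,n-1\}$, so $e_{n-k-1}(\nu_2,\dots,\nu_n)$ involves $n-1$ positive variables and is strictly positive, and $\mu_1>0$, giving strict positivity of $(-1)^{n-1}c_k(G)$.

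The main obstacle is purely the sign/parity bookkeeping in the last display and, more substantively, justifying that $\det D(G)\ne 0$ (equivalently that $e_0=1$ does not sneak in a vanishing leading coefficient, and that $\mu_n\ne 0$) — this is where Theorem~\ref{GHH} is essential and where one must check that the clique contributions do not cancel. An alternative, cleaner route that avoids explicit parity juggling: interlacing is not needed; instead use that $D(G)$ is nonsingular with inertia $(1,0,n-1)$, so $-D(G)$ has inertia $(n-1,0,1)$, and $\det(xI-D(G))=(-1)^n\det(D(G)-xI)$; one then argues directly that for the polynomial $q(x):=\det(xI+D(G))=\prod(x+\mu_i)=(x+\mu_1)\prod_{i\ge2}(x-\nu_i)$ all coefficients alternate in a controlled way — but I think the elementary-symmetric-function computation above is the most transparent, provided one tracks the signs carefully. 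A subtlety worth flagging: the lemma claims strict positivity only for $0\le k\le n-2$; for $k=n-1$ the relevant term is $e_0=1$ alone (with the $\mu_1$ term absent since $e_{-1}=0$ by convention... actually $e_{n-k-1}=e_0=1$), so $(-1)^{n-1}c_{n-1}$ equals something that need not be strictly positive, consistent with the stated range, and I would not claim more than the lemma does.
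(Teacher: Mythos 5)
Your first step (getting the inertia $(1,0,n-1)$ via Theorem~\ref{thmfinal} plus nonsingularity from Theorem~\ref{GHH}, using $\det D(K_t)=(-1)^{t-1}(t-1)$, $\mathrm{cof}\,D(K_t)=(-1)^{t-1}t$ and $\sum_i(t_i-1)=n-1$ so that all summands carry the sign $(-1)^{n-1}$) is sound, and is a legitimate alternative to the paper, which simply invokes the inertia result of Lin, Liu and Lu. The gap is in the positivity step. With the paper's convention $c_k$ is the coefficient of $x^k$ in $\det(D-xI)=\prod_i(\mu_i-x)$, so $c_k=(-1)^k e_{n-k}(\mu_1,-\nu_2,\dots,-\nu_n)$, and after correct bookkeeping one gets
$$(-1)^{n-1}c_k(G)=\mu_1\,e_{n-k-1}(\nu_2,\dots,\nu_n)-e_{n-k}(\nu_2,\dots,\nu_n),$$
a \emph{difference} of two nonnegative quantities, not a sum. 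Your claim that "after the dust settles" the expression is $\mu_1 e_{n-k-1}+e_{n-k}$ up to a positive overall sign is therefore false, and no amount of parity-tracking repairs it: if $\mu_1$ were small relative to the $\nu_i$ this difference would be negative, so an extra input is needed. (Only $k=0$ escapes, since $e_n$ of the $n-1$ variables $\nu_2,\dots,\nu_n$ vanishes.)

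The missing ingredient is exactly the one you brush aside in your final remark about $k=n-1$: since $\mathrm{tr}\,D=0$, one has $c_{n-1}(G)=0$, i.e.\ $\mu_1=\nu_2+\cdots+\nu_n=e_1(\nu)$. Substituting this, positivity for $1\le k\le n-2$ reduces to the elementary inequality $e_1(\nu)\,e_{m}(\nu)>e_{m+1}(\nu)$ for positive $\nu_i$ (every monomial of $e_{m+1}$ occurs in $e_1e_m$, which also contains extra positive terms with repeated factors). This trace identity plus that inequality is precisely the paper's argument (following Edelberg--Garey--Graham: $c_{n-1}=g_1-\lambda_1=0$, hence $g_{n-k}-\lambda_1 g_{n-k-1}=g_{n-k}-g_1g_{n-k-1}<0$), and it is the heart of the lemma; as written, your proposal does not contain it.
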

\begin{proof}
It was shown in~\cite[Theorem~3.2]{LLL} that the distance matrix $D(G)$ of a block graph $G$ has one positive and $n-1$ negative eigenvalues.
We now extend  the argument given in ~\cite[Theorem 2.3]{EGG76} that $(-1)^{n-1}c_{k}(T)>0$ for $0\leq k\leq n-2$ for a tree $T$, given that its distance matrix has one positive  and $n-1$  negative eigenvalues. Let the eigenvalues of the distance matrix $D(G)$ of a block graph $G$ be denoted by $\lambda_1, -\lambda_2, \dots, -\lambda_n$, where $\lambda_i>0$ for~$i=1,\dots,n$. Then the distance characteristic polynomial is
\begin{align*}
	\det(D(G)-xI) &= (-1)^n(x-\lambda_1)(x+\lambda_2)\cdots(x+\lambda_n)= \\
	&= (-1)^n(x-\lambda_1)\sum\limits_{k=0}^{n-1} g_{n-1-k} x^{k}= \\
	&= (-1)^n\left(x^n+\sum\limits_{k=1}^{n-1}(g_{n-k}-\lambda_1g_{n-k-1})x^{k}-\lambda_1g_{n-1}\right),
\end{align*}
where $g_k$ is the sum of all $k$-fold products of $\lambda_2,\dots,\lambda_n$. Then, $c_{n-1}(G)=g_1-\lambda_1$, but also $c_{n-1}(G)=-c_n(G)\text{tr}(D)=0$; thus, $g_1=\lambda_1$. Then, since $g_{n-k}-\lambda_1g_{n-k-1}=g_{n-k}-g_1g_{n-k-1}<0$ for $k=1,\dots,n-2$, 
and since for $k=0$, $-\lambda_1g_{n-1}=-g_1g_{n-1}=-\prod\limits_{i=1}^{n}\lambda_i<0$, it follows that $(-1)^{n-1}c_{k}(G)>0$ for $0\leq k\leq n-2$.
\end{proof}



\begin{thm}\label{theo:unimodality}
For a block graph $G$, the sequence of coefficients of the distance characteristic polynomial $(-1)^{n-1}c_0(G),\ldots,(-1)^{n-1}c_{n-2}(G)$ is unimodal.
\end{thm}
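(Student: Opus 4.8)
The plan is to combine the spectral information about $D(G)$ recalled in the proof of Lemma~\ref{claim} with a sign–variation argument. Write $D=D(G)$, and recall that its eigenvalues are $\lambda_1>0$ and $-\lambda_2,\dots,-\lambda_n$ with all $\lambda_i>0$, and that $\operatorname{tr}D=0$ forces $\lambda_1=\sum_{i=2}^{n}\lambda_i$. Set $q(x):=\prod_{i=2}^{n}(x+\lambda_i)=\sum_{k=0}^{n-1}q_kx^k$, a monic polynomial of degree $n-1$; its coefficients are elementary symmetric functions of the positive numbers $\lambda_2,\dots,\lambda_n$, so $q_k>0$ for every $k$, while $q_{n-1}=1$ and $q_{n-2}=\sum_{i=2}^{n}\lambda_i=\lambda_1$. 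Since $\det(D-xI)=(-1)^n(x-\lambda_1)q(x)$, putting $a_k:=(-1)^{n-1}c_k(G)$ and $q_{-1}=q_n=0$ one reads off
\[
 a_k=\lambda_1 q_k-q_{k-1},\qquad 0\le k\le n .
\]
By Lemma~\ref{claim} we have $a_k>0$ for $0\le k\le n-2$, and directly $a_{n-1}=\lambda_1 q_{n-1}-q_{n-2}=\lambda_1-\lambda_1=0$ and $a_n=-q_{n-1}=-1$; in particular the sequence $a_0,\dots,a_n$ has exactly one sign change.

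The key step is to introduce $R(x):=(\lambda_1-x)(1-x)q(x)$. Since $(\lambda_1-x)q(x)=\sum_k a_kx^k$, we have $R(x)=(1-x)\sum_k a_kx^k=\sum_{k=0}^{n+1}\psi_k x^k$ with $\psi_k:=a_k-a_{k-1}$ (and $a_{-1}=a_{n+1}=0$); thus the coefficient sequence of $R$ is precisely the sequence of consecutive differences of $(a_k)$. On the other hand $R$ has only real roots — namely $\lambda_1$, $1$ and $-\lambda_2,\dots,-\lambda_n$ — of which exactly two, $\lambda_1$ and $1$ counted with multiplicity, are positive. Because Descartes' rule of signs holds with equality for polynomials all of whose roots are real, the sequence $\psi_0,\dots,\psi_{n+1}$ has exactly two sign changes.

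It remains to locate these two changes. From the first paragraph $\psi_0=a_0>0$, $\psi_n=a_n-a_{n-1}=-1<0$ and $\psi_{n+1}=-a_n=1>0$. A sequence that begins and ends with a positive term and has exactly two sign changes has, after deleting zeros, sign pattern $+\cdots+\,-\cdots-\,+\cdots+$ with a nonempty middle block of negatives; as $\psi_n<0$ and $\psi_{n+1}>0$, the second change lies between positions $n$ and $n+1$. Hence there is an index $m$ with $0\le m\le n-1$ such that $\psi_k\ge 0$ for $0\le k\le m$ and $\psi_k\le 0$ for $m<k\le n$. Restricting to $1\le k\le n-2$ this reads $a_{k-1}\le a_k$ for $k\le m$ and $a_{k-1}\ge a_k$ for $k>m$; that is, $a_0\le a_1\le\cdots\le a_{\min(m,n-2)}\ge\cdots\ge a_{n-2}$. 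Therefore $(-1)^{n-1}c_0(G),\dots,(-1)^{n-1}c_{n-2}(G)$ is unimodal.

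The real content is the choice of $R$ in the second paragraph: multiplying $(\lambda_1-x)q(x)$ by the extra factor $(1-x)$ converts the single sign change of $(a_k)_{k=0}^{n}$ guaranteed by Lemma~\ref{claim} into a configuration with exactly two sign changes, and real-rootedness of $R$ forces the second of them to occur past the index $n-2$; the rest is bookkeeping. The only other point requiring care is the eigenvalue count for $D(G)$ (one positive, $n-1$ negative, hence $\det D(G)\neq 0$) recalled in the proof of Lemma~\ref{claim} and underpinned by Theorem~\ref{thmfinal}. One could instead aim for the stronger conclusion that $(a_k)_{k=0}^{n-2}$ is log-concave, as was done for trees in \cite{GRWC2015}, by expanding $a_k^2-a_{k-1}a_{k+1}$ as an upward-opening quadratic in $\lambda_1$ and bounding it from below using log-concavity of $(q_k)$ together with the relation $\lambda_1=q_{n-2}$; that route is more delicate, and for unimodality the argument above is shorter.
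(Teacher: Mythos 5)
Your argument is correct, but it runs along a genuinely different track from the paper. The paper's proof is a two-line application of cited facts: since $D(G)$ is real symmetric, the distance characteristic polynomial is real-rooted, so by Lemma~\ref{knownunimodalitylemma}$(i)$ (Newton-type inequalities) its coefficient sequence is log-concave; combined with the positivity $(-1)^{n-1}c_k(G)>0$ for $0\le k\le n-2$ from Lemma~\ref{claim}, Lemma~\ref{knownunimodalitylemma}$(ii)$ yields unimodality. You instead bypass log-concavity entirely: you multiply the sign-normalized polynomial $(\lambda_1-x)q(x)=\sum_k a_kx^k$ by the extra factor $(1-x)$, observe that the coefficients of $R(x)=(\lambda_1-x)(1-x)q(x)$ are the consecutive differences $a_k-a_{k-1}$, and use the exactness of Descartes' rule of signs for real-rooted polynomials (two positive roots, $\lambda_1$ and $1$, hence exactly two sign changes) together with the boundary values $\psi_n=-1$, $\psi_{n+1}=1$ to force the single ``descent point'' of $(a_k)$; all steps, including the handling of zero coefficients and the case $\lambda_1=1$, check out, and you only need Lemma~\ref{claim} for $a_0>0$ (really just the inertia of $D(G)$, i.e.\ Theorem~3.2 of \cite{LLL}, which the paper also invokes). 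What each approach buys: the paper's route is shorter given the quoted lemmas and delivers the stronger log-concavity statement for free, mirroring the tree result of \cite{GRWC2015}; your route is more self-contained and elementary, needing only Descartes' exactness rather than Newton's inequalities, but it yields unimodality only. One small remark on your closing comment: obtaining log-concavity here is not actually delicate, since it is exactly what Lemma~\ref{knownunimodalitylemma}$(i)$ gives once real-rootedness of $p_D$ is noted, so the stronger conclusion is available at no extra cost.
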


\begin{proof}
First, it follows from Lemma \ref{claim} that if $G$ is a block graph, then the coefficients of the distance characteristic polynomial satisfy $(-1)^{n-1}c_{k}(G)>0 \quad \text{for }0\leq k \leq n-2$.

Since the distance matrix $D$ is a real symmetric matrix, the distance characteristic polynomial is real-rooted. From Lemma \ref{knownunimodalitylemma}$(i)$, it  follows that the sequence is log-concave.

Moreover, since
$$(-1)^{n-1}c_{k}(G)>0 \quad \text{for }0\leq k \leq n-2,$$
then Lemma \ref{knownunimodalitylemma}(ii) implies that the sequence of coefficients of the distance characteristic polynomial is unimodal.
\end{proof}

\subsection{Peak location}\label{sec:peak}

In this section we answer the peak location part of Question~\ref{que:coefficientsblock} for several extremal families of uniform block graphs with small diameter. 

The idea is to derive an explicit formula for the coefficients and use the unimodality to find the peak. However, while the method for obtaining the peak for stars and paths relies on the algebraic properties of the corresponding distance matrix \cite{C89}, for block graphs we will exploit several of its spectral properties.

Consider the \emph{windmill graph} $W(k,t)$, which is a block graph formed by joining $k$ cliques of size $t$ at a shared universal vertex. The following result uses Stirling's approximation to prove an estimate for the peak location of a windmill graph for large $k$. 

\begin{thm}\label{thm:friend.gen.approx} Consider a windmill graph $W(k,t)$ with $k\geq 2$ and $t\geq 3$, so that $n=|V|=k(t-1)+1$. Then, the sequence of coefficients is unimodal, and as $k$ approaches infinity the peak of the sequence occurs at $\frac{kt(t-1)}{2(t+1)}  + O(\log k)$.
\end{thm}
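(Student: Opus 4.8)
The plan is to exploit the high symmetry of $W(k,t)$ to compute its distance spectrum in closed form, turn this into an explicit formula for the coefficients $(-1)^{n-1}c_j$, and then locate the peak by feeding that formula into Stirling's approximation, using the unimodality already supplied by Theorem~\ref{theo:unimodality} (valid since $W(k,t)$ is a block graph) to reduce matters to deciding where a single difference changes sign. Write $v_0$ for the hub and $C_1,\dots,C_k$ for the $k$ sets of $t-1$ non-hub vertices; the partition $\{\{v_0\},C_1,\dots,C_k\}$ is equitable. Inspecting $D=D(W(k,t))$ one finds: a vector supported on one $C_i$ with zero coordinate sum is an eigenvector for $-1$, so $-1$ has multiplicity $k(t-2)$; a vector vanishing on $v_0$, constant on each $C_i$, with the $k$ constants summing to zero is an eigenvector for $-t$, so $-t$ has multiplicity $k-1$; and on the remaining invariant plane $\langle e_{v_0},\mathbf 1_{V\setminus\{v_0\}}\rangle$ the matrix $D$ acts by the $2\times2$ quotient matrix $\left(\begin{smallmatrix}0 & k(t-1)\\ 1 & 2k(t-1)-t\end{smallmatrix}\right)$, with characteristic polynomial $x^2-(2k(t-1)-t)x-k(t-1)$. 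As the multiplicities sum to $n$, the distance spectrum consists exactly of $-1$ (multiplicity $k(t-2)$), $-t$ (multiplicity $k-1$), and the two roots $\theta_+>0>\theta_-$ of that quadratic, so $\theta_++\theta_-=2k(t-1)-t$, $\theta_+\theta_-=-k(t-1)$, whence $\theta_+=2k(t-1)+O(1)$ and $\theta_-\to-\tfrac12$ as $k\to\infty$.

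From the spectrum,
\[
\det(xI-D)=\bigl(x^2-(2k(t-1)-t)x-k(t-1)\bigr)\,(x+1)^{k(t-2)}(x+t)^{k-1}.
\]
Expanding $(x+1)^{k(t-2)}(x+t)^{k-1}=\sum_m b_m x^m$, where $b_m=\sum_i\binom{k(t-2)}{i}\binom{k-1}{m-i}t^{\,k-1-m+i}>0$, and using $\det(D-xI)=(-1)^n\det(xI-D)$, a direct expansion gives
\[
(-1)^{n-1}c_j = k(t-1)\,b_j + (2k(t-1)-t)\,b_{j-1} - b_{j-2}, \qquad 0\le j\le n-2,
\]
which is positive by Lemma~\ref{claim}; equivalently $(-1)^{n-1}c_j=\theta_+ r_j-r_{j-1}$, where $r(x)=(x+1)^{k(t-2)}(x+t)^{k-1}(x-\theta_-)$ has positive, log-concave coefficients $r_j$.

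Put $S_j:=(-1)^{n-1}c_j$. By Theorem~\ref{theo:unimodality} the sequence $S_0,\dots,S_{n-2}$ is positive and unimodal, so its peak is the unique index at which $S_j-S_{j-1}$ switches sign, and it suffices to analyse that sign. After normalization $(b_m)$ is the mass function of $X+Y$ for independent $X\sim\mathrm{Bin}(k(t-2),\tfrac12)$ and $Y\sim\mathrm{Bin}(k-1,\tfrac1{t+1})$ — equivalently, $(b_m)$ is the coefficient sequence of a real-rooted polynomial with negative roots — so it is log-concave with a unique mode, which by Darroch's rule lies within $1$ of
\[
\frac{k(t-2)}{2}+\frac{k-1}{t+1}=\frac{kt(t-1)}{2(t+1)}-\frac{1}{t+1}=\frac{kt(t-1)}{2(t+1)}+O(1).
\]
Stirling's formula gives $b_m=e^{\varphi(m)}(1+o(1))$ for a strictly concave $\varphi$ with $\varphi''=-\Theta(1/k)$ throughout a window of width $\Theta(\sqrt{k}\log k)$ about the mode $m_0$. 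Since $k(t-1)b_j+(2k(t-1)-t)b_{j-1}$ dominates $S_j$ (the remaining term $b_{j-2}$ being smaller by a factor $\Theta(k)$), substituting $b_j/b_{j-1}=e^{\varphi(j)-\varphi(j-1)}(1+o(1))$ into $S_j-S_{j-1}$ expresses it as $b_{j-1}$ times $\bigl(-\Theta(1)(j-m_0)+O(1)+(\text{Stirling error})\bigr)$; by concavity this is positive once $j\le m_0-O(\log k)$ and negative once $j\ge m_0+O(\log k)$, so the peak occurs at $\frac{kt(t-1)}{2(t+1)}+O(\log k)$.

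The crux is the last step: near the mode of $(b_m)$ the leading parts of $S_j-S_{j-1}$ nearly cancel, so the location of the sign change is governed by lower-order terms — the $O(1/k)$ mismatch between consecutive $b_m$'s, the correction $-b_{j-2}$, and the error in the Stirling approximation of $b_m$. Making this rigorous requires uniform Stirling estimates for $b_m$ over a window of width $\Theta(\sqrt{k}\log k)$ about the mode, together with a careful accounting of how far each lower-order term can displace the sign change; these are exactly the estimates that produce (and likely somewhat over-estimate) the $O(\log k)$ error term. The spectral computation and the polynomial expansion are, by contrast, routine.
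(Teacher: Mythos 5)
Your spectral computation reproduces exactly the factorization the paper starts from, namely $p_D(x)=(x+1)^{k(t-2)}(x+t)^{k-1}\bigl(x^2-(2k(t-1)-t)x-k(t-1)\bigr)$, and your coefficient identity $(-1)^{n-1}c_j=k(t-1)b_j+(2k(t-1)-t)b_{j-1}-b_{j-2}$ is correct; but from there your peak-location argument takes a genuinely different route. The paper discards the quadratic factor (asserting it cannot affect the peak at the stated accuracy), works with the coefficients $d_i$ of $(x+1)^{k(t-2)}(x+t)^{k-1}$, bounds $d_i\le k\,m_i$ where $m_i$ is the largest summand $\binom{(t-2)k}{i-j}\binom{k-1}{j}t^{k-1-j}$, uses Stirling to show the maximizing index is $j\sim i/(t-1)$, and then compares $k\,m_i$ with the central term of $d_p$, finishing with unimodality. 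You instead keep the quadratic factor exactly, identify the normalized $(b_m)$ as the law of a sum of independent Bernoulli variables so that Darroch's rule pins its mode within $O(1)$ of $\frac{kt(t-1)}{2(t+1)}$, and then need only the sign of the first difference $S_j-S_{j-1}$ at two indices at distance $\Theta(\log k)$ from that mode, where the $\Theta(k)$ prefactors amplify the drift of $(b_m)$ well above the $O(1/k)$ relative Stirling errors; Theorem~\ref{theo:unimodality} then localizes the peak. What each buys: the paper's route reduces everything to binomial-coefficient inequalities but pays with the crude bound $d_i\le k\,m_i$ (a factor $k$ of slack against a profile of width $\Theta(\sqrt{k})$, so the comparison only bites well away from the peak) and with the undefended dismissal of the quadratic factor; your route handles that factor exactly, replaces part of the Stirling analysis by a clean probabilistic fact, and—since $S_j-S_{j-1}$ can alternatively be controlled by ratio and log-concavity estimates on $(b_m)$—could in principle be sharpened to an $O(1)$ window. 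Like the paper, you leave the final quantitative error accounting at sketch level (as you acknowledge in your closing paragraph), so neither argument is fully detailed at the crux, but your plan is sound and, if anything, yields a tighter handle on the peak.
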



\begin{proof} The distance polynomial of the graph is
{\small{
$$p_D(x)=(x+1)^{(t-2)k}(x+t)^{k-1}(x^2-(t-2+2(t-1)(k-1))x-k(t-1)).$$
}}
To locate the peak of $p_D(x)$, it is sufficient to consider $(x+1)^{(t-2)k}(x+t)^{k-1}$.
	We know $(x+1)^{(t-2)k}$ peaks at $\frac{k(t-2)}2$. The coefficients of $(x+t)^{k-1}$ are defined by the formula $f_i = {k-1 \choose i}t^{k-1-i}$. By calculating explicitly $f_{\frac{k}{t+1}-2},f_{\frac{k}{t+1}-1},f_{\frac{k}{t+1}},f_{\frac{k}{t+1}+1}$ and using unimodality of binomial coefficients we conclude the sequence $f_i$ peaks at $\frac{k}{t+1}$.
	
	Define $$p = \frac{k(t-2)}2+\frac{k}{t+1}= \frac{kt(t-1)}{2(t+1)},$$ and let $d_i$ be the coefficients of $(x+1)^{(t-2)k}(x+t)^{k-1}$. Then  $$d_i=\sum\limits_{j=0}^i {(t-2)k \choose i-j} {k-1 \choose j} t^{k-1-j}.$$ We define $m_i = \max\limits_{j} {(t-2)k \choose i-j} {k-1 \choose j} t^{k-1-j}$, the maximal term in the sum. Then we have $d_i \leq k\cdot m_i$ for all $i$. On the other hand, $d_p$ has $${(t-2)k \choose (t-2)k/2} {k-1 \choose k/(t+1)} t^{k-1-k/(t+1)}$$ as one of the terms in its sum, so $d_p$ is greater than that. The idea is to show that $k m_i \leq {(t-2)k \choose (t-2)k/2} {k-1 \choose k/(t+1)} t^{k-1-k/(t+1)}$ for all $i\leq k-1$, which implies $d_i \leq d_p$.
	
	To find $m_i$, we apply Stirling's formula to ${(t-2)k \choose i-j} {k-1 \choose j} t^{k-1-j}$ and find its derivative with respect to $j$. Note that for $n\to\infty$, we have $$\log n! = n\log n-n+\frac12\log(2\pi n)+O(\log n),\text{ or }$$
$$n!-\left(\frac{n}{e}\right)^n\sqrt{2\pi}=n^{O(1)},$$
	meaning that $n!$ and $\frac{n^n}{e^n} \sqrt{2\pi n}$ are asymptotically equivalent. Also observe that since the sequence is unimodal it is sufficient to find a local peak among the coefficients with high values of $k$ and $n-k$. Hence, for large enough $n$, $k$, and $n-k$ we may derive
	{\small{
	$${n \choose k}= \frac{n!}{k!(n-k)!}\sim \frac{\sqrt{2\pi n} n^n e^{n-k} e^k}{2\pi \sqrt{k(n-k)} k^k (n-k)^{n-k} e^n} =\frac{n^n\sqrt{n}}{ \sqrt{2\pi k(n-k)} k^k (n-k)^{n-k}}.$$
	}}
	
	Then, we have
	{\small{\begin{align*}
	    &\left.{(t-2)k \choose i-j} {k-1 \choose j} t^{k-1-j}\right. \sim \\ 
	    &\sim \frac{((t-2)k)^{k(t-2)+1/2} (k-1)^{k-1+1/2}}{2\pi (i-j)^{i-j+1/2} ((t-2)k-i+j)^{(t-2)k-i+j+1/2} j^{j+1/2} (k-1-j)^{k-1-j+1/2}},
	\end{align*}}}
	and its derivative is then equal to $0$ if and only if
	{\small \begin{align*}
		\log\left(\frac{(i-j)(k-1-j)}{j (k(t-2)-i+j)}\right) &= \frac1{2j}-\frac1{2(i-j)}-\frac1{2(k-1-j)}+\frac1{2(k(t-2)-i+j)}, 
	\end{align*}}		
	{\small \begin{align*}		
		\frac{(i-j)(k-1-j)}{j (k(t-2)-i+j)} &= \sqrt{\frac{e^{1/j} e^{1/(k(t-2)-i+j)}}{e^{1/(i-j)} e^{1/(k-1-j)}}.} 
	\end{align*}}
Since $\sqrt{e^{1/z}}$ and $1$ are asymptotically equivalent for $z\to\infty$, we may assume
\begin{align*}
	\frac{(i-j)(k-1-j)}{j(k(t-2)-i+j)} &\sim 1, \\
	\frac{i(k-1)}{k(t-1)-1} &\sim j 
\end{align*}
For $k\to\infty$ we may assume $j= \frac{i}{t-1}$ and then the inequality we want to prove is

{\footnotesize{
$$k {(t-2)k \choose i-i/(t-1)} {k-1 \choose i/(t-1)} t^{k-1-i/(t-1)} \leq {(t-2)k \choose (t-2)k/2} {k-1 \choose k/(t+1)} t^{k-1-k/(t+1)}.$$
}}
This follows for the two inequalities:
{\small{
$$k {(t-2)k \choose i-i/(t-1)} t^{k/(t+1)} \leq {(t-2)k \choose (t-2)k/2} t^{i/(t-1)} \text{ and } {k-1 \choose i/(t-1)} \leq {k-1 \choose k/(t+1)}.$$
}}
The latter inequality follows from $\frac{i}{t-1}\leq\frac{k}{t+1}$ for $k\to\infty$. The former inequality can be shown using Stirling approximation.
\end{proof}

\begin{rem}
One may wonder on the extension to a windmill graph in which the cliques are not all of the same size. Consider a block graph with one universal vertex and $k_i$ cliques of size $t_i$ for $i\in\{1,\dots,l\}$ and some $l$, all sizes $t_1,\dots,t_l$ are distinct. Then the distance characteristic polynomial takes form
$$p_D(x) = (x+1)^{n-k-1} \prod\limits_{i=1}^l (x+t_i)^{k_i-1} p_B(x),$$
where $p_B(x)$ is the characteristic polynomial of the quotient matrix corresponding to the coarsest partition into $l+1$ vertex subsets $X_0,X_1,\dots X_l$: $X_0$ only contains the universal vertex, and $X_i$ has all vertices (except for the universal one) from all $k_i$ cliques of size $t_i$. Then the problem of finding $m_i$ as it is defined above is equivalent to a problem of finding a peak of an $l$-dimensional function, which complicates the application of the approximation approach even for small $l$. 
\end{rem}

The \emph{friendship graph} (or \emph{Dutch windmill graph}), denoted $F_{2k+1}$, is the graph $W(k,3)$ obtained by joining $k$ copies of $K_3$ by a common vertex so that $n=|V|=2k+1$. As a direct consequence of Theorem~\ref{thm:friend.gen.approx} we obtain the following corollary.

\begin{cor}
Let $F_{2k+1}$ be the friendship graph on $n=|V|=2k+1$ vertices. Then, the sequence of coefficients is unimodal, and as $k$ approaches infinity the peak of the sequence occurs at $3k/4 + O(\log k)$.
\end{cor}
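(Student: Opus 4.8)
The plan is to specialize Theorem~\ref{thm:friend.gen.approx} to the case $t=3$. By definition the friendship graph $F_{2k+1}$ is the windmill graph $W(k,3)$: it is obtained by gluing $k$ triangles $K_3$ along a common universal vertex, so its order is $n=k(t-1)+1=2k+1$ with $t=3$, matching the parametrization in the theorem, and $t=3$ lies in the allowed range $t\geq 3$ while $k\to\infty$ certainly satisfies $k\geq 2$.

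First I would record the unimodality claim. This is immediate in two ways: one may quote the unimodality conclusion of Theorem~\ref{thm:friend.gen.approx} directly, or, more elementarily, observe that $F_{2k+1}$ is a block graph (all of its blocks are the triangles $K_3$), so Theorem~\ref{theo:unimodality} applies and the coefficient sequence of its distance characteristic polynomial is unimodal.

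Next I would read off the peak location. Theorem~\ref{thm:friend.gen.approx} places the peak at $\frac{kt(t-1)}{2(t+1)}+O(\log k)$; substituting $t=3$ gives
$$\frac{k\cdot 3\cdot 2}{2\cdot 4}+O(\log k)=\frac{6k}{8}+O(\log k)=\frac{3k}{4}+O(\log k),$$
which is exactly the claimed estimate. There is no genuine obstacle here: the corollary is a plain substitution, and the only things to verify are the bookkeeping that $W(k,3)$ has $2k+1$ vertices and that $t=3$ is admissible, both of which are immediate. If one wants an independent sanity check, substituting $t=3$ into the explicit distance polynomial $(x+1)^{(t-2)k}(x+t)^{k-1}\bigl(x^2-(t-2+2(t-1)(k-1))x-k(t-1)\bigr)$ from the proof of Theorem~\ref{thm:friend.gen.approx} yields $(x+1)^{k}(x+3)^{k-1}\bigl(x^2-(4k-3)x-2k\bigr)$, and the argument there (the factor $(x+1)^{k}$ peaks at $k/2$, the factor $(x+3)^{k-1}$ peaks at $k/4$, and these add) again gives $3k/4+O(\log k)$.
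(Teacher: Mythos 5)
Your proposal is correct and matches the paper exactly: the paper obtains this corollary as a direct consequence of Theorem~\ref{thm:friend.gen.approx} by setting $t=3$, which is precisely your substitution giving $\frac{k\cdot 3\cdot 2}{2\cdot 4}=\frac{3k}{4}$, and your sanity check against the explicit polynomial $(x+1)^{k}(x+3)^{k-1}\bigl(x^2-(4k-3)x-2k\bigr)$ is consistent with the theorem's proof.
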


Using a different and non-asymptotic approach, next we show that the windmill graph with 2 cliques, $W(2,t)$, has distance characteristic polynomial coefficients with peak exactly at $\lfloor\frac{n}{2}\rfloor$. 

\begin{thm}
Let $G$ be a graph obtained by adding a universal vertex to the disjoint union of two cliques $K_t$, and let $n=|V(G)|=2t+1$. Then the sequence of coefficients of the distance characteristic polynomial of $G$ is unimodal with peak at $\lfloor\frac{n}{2}\rfloor$.
\end{thm}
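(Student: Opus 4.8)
The plan is to combine the explicit distance characteristic polynomial of $G$ with the unimodality already established in Theorem~\ref{theo:unimodality}, and then to pin down the peak by checking two strict inequalities near the centre of the coefficient sequence.

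\emph{Step 1: the distance polynomial.} First note that $G$ is the windmill graph $W(2,t+1)$: each $K_t$ together with the shared universal vertex forms a copy of $K_{t+1}$, and these two copies of $K_{t+1}$ are the blocks of $G$. Substituting $k=2$ and clique size $t+1$ into the formula obtained in the proof of Theorem~\ref{thm:friend.gen.approx} gives
$$p_D(x)=(x+1)^{2(t-1)}(x+t+1)\big(x^2-(3t-1)x-2t\big).$$
Since $n=2t+1$ is odd, $\det(D(G)-xI)=(-1)^np_D(x)=h(x)\big(2t+(3t-1)x-x^2\big)$, where $h(x):=(x+1)^{2(t-1)}(x+t+1)=\sum_j h_jx^j$ has strictly positive coefficients $h_j=\binom{2t-2}{j-1}+(t+1)\binom{2t-2}{j}$ (binomials vanishing outside the natural range). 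Comparing coefficients of $x^k$ yields $c_k=2t\,h_k+(3t-1)\,h_{k-1}-h_{k-2}$.

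\emph{Step 2: reduction to two inequalities.} By Lemma~\ref{claim} and Theorem~\ref{theo:unimodality}, the sequence $c_0,\dots,c_{n-2}$ is positive and unimodal (note $(-1)^{n-1}=1$ here); let $k^\ast$ be a peak index. By the definition of unimodality, if $c_{t-1}<c_t$ then $k^\ast\ge t$, and if $c_t>c_{t+1}$ then $k^\ast\le t$; hence it suffices to prove these two strict inequalities, from which $k^\ast=t=\lfloor n/2\rfloor$ follows. (We may assume $t\ge 2$, so $t-1,t,t+1\in\{0,\dots,2t-1\}$; the case $t=1$ is the path $P_3$ and is immediate.)

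\emph{Step 3 (the main obstacle): proving the inequalities.} Writing $\Delta_j:=h_j-h_{j-1}$, we have $c_t-c_{t-1}=2t\Delta_t+(3t-1)\Delta_{t-1}-\Delta_{t-2}$ and $c_{t+1}-c_t=2t\Delta_{t+1}+(3t-1)\Delta_t-\Delta_{t-1}$, so everything reduces to evaluating the four differences $\Delta_{t-2},\Delta_{t-1},\Delta_t,\Delta_{t+1}$. Using the adjacent-entry ratios in row $2t-2$ of Pascal's triangle (for instance $\binom{2t-2}{t}=\binom{2t-2}{t-2}=\tfrac{t-1}{t}\binom{2t-2}{t-1}$), each such $\Delta_j$ can be written as $\binom{2t-2}{t-1}$ times an explicit rational function of $t$; e.g.\ $\Delta_t=-\binom{2t-2}{t-1}$ and $\Delta_{t-1}=\tfrac{t^2+5t-2}{t(t+1)}\binom{2t-2}{t-1}$. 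Substituting and simplifying, one obtains
$$c_t-c_{t-1}=\frac{t^3+11t^2+2t-2}{(t+1)(t+2)}\binom{2t-2}{t-1},\qquad c_{t+1}-c_t=-\left(9t-5+\frac{t^2+5t-2}{t(t+1)}\right)\binom{2t-2}{t-1},$$
and both the numerator on the left and the bracket on the right are manifestly positive for all $t\ge 1$. Thus $c_{t-1}<c_t$ and $c_t>c_{t+1}$, which together with Step~2 completes the argument. I expect the only real work to be the careful bookkeeping of these binomial-coefficient differences; once the closed forms above are in hand, the sign check is immediate.
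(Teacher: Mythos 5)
Your proposal is correct; I checked the key closed forms. Writing $h(x)=(x+1)^{2t-2}(x+t+1)$, the formulas $\Delta_t=-\binom{2t-2}{t-1}$, $\Delta_{t-1}=\tfrac{t^2+5t-2}{t(t+1)}\binom{2t-2}{t-1}$, and the resulting expressions for $c_t-c_{t-1}$ and $c_{t+1}-c_t$ are all accurate (for instance, at $t=2$ they give $9$ and $-30$, matching the actual coefficient sequence $12,43,52,22$ of the friendship graph $F_5$), and your factorization of $p_D$ agrees with the windmill formula of Theorem~\ref{thm:friend.gen.approx} with $k=2$ and clique size $t+1$. Your route, however, differs from the paper's in the decisive step. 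The paper obtains the same polynomial via an equitable partition into three cells, writes it as $(x+1)^{2t-2}$ times an explicit cubic, and then proves the \emph{entire} monotone chain $c_0\le\cdots\le c_t\ge\cdots\ge c_{2t+1}$ by comparing $c_{t-j}$ with $c_{t-j-1}$ (and $c_{t+j}$ with $c_{t+j+1}$) for all $j$, reducing each comparison to a rational-function inequality in $(t,j)$ that is verified with symbolic-algebra software; it does not invoke Theorem~\ref{theo:unimodality} in this proof. You instead lean on Theorem~\ref{theo:unimodality} (together with Lemma~\ref{claim} for positivity) so that only the two central strict inequalities $c_{t-1}<c_t>c_{t+1}$ must be checked, and by absorbing the linear factor $x+t+1$ into $h(x)$ you reduce each coefficient to a three-term combination $c_k=2t\,h_k+(3t-1)h_{k-1}-h_{k-2}$, yielding closed forms whose signs are evident by hand — no software verification and no case analysis over $j$. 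This is essentially the strategy the paper itself deploys later for Theorem~\ref{thm.barbell}, so it fits the paper's toolkit; what the paper's longer argument buys in exchange is an explicit, self-contained proof of the full monotonicity on both sides of the peak, independent of the general unimodality theorem, while yours is shorter and entirely elementary. One small presentational caveat: state explicitly (as you implicitly do) that the sequence considered is $c_0,\dots,c_{n-2}$ of $\det(D-xI)$, where $(-1)^{n-1}=1$ since $n$ is odd, so the positivity from Lemma~\ref{claim} applies verbatim; appending $c_{n-1}=0$ and $c_n=-1$ does not move the peak.
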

\begin{proof}
Consider an equitable partition of $G$ into three subsets, one is the universal vertex and the other two correspond to cliques $K_t$. Using the quotient matrix of this partition, we can compute the distance characteristic polynomial of $G$ to be
\begin{eqnarray*}
p_{D}(x)&=&(x+1)^{2t-2}(-(t+1)-x)(-2t-(3t-1)x+x^2) =\\
&=&(x+1)^{2t-2}(ax^3 + bx^2 + cx + d),
\end{eqnarray*}
where $a=-1$, $b=2t-2$, $c=3t^2+4t-1$, and $d=2t^2+2t$.
Multiplying the binomial expansion of $(x+1)^{2t-2}$ by $(ax^3 + bx^2 + cx + d)$ and combining the coefficients of terms with the same power, we obtain
{\allowdisplaybreaks{
{\small{
\begin{align*}
p_{D}(x)&=\left[d{2t-2\choose 0}\right]x^0 +\\
&+\left[c{2t-2\choose 0}+d{2t-2\choose 1}\right]x^1+\\
&+\left[b{2t-2\choose0}+c{2t-2\choose 1}+d{2t-2\choose 2}\right]x^2+\\
&+\sum_{i=0}^{2t-5}\left[a{2t-2\choose i} +b{2t-2\choose i+1}+c{2t-2\choose i+2}+d{2t-2\choose i+3}\right]x^{i+3}+\\
&+\left[a{2t-2\choose 2t-4} + b{2t-2\choose 2t-3} + c{2t-2\choose 2t-2}\right]x^{2t-1}+\\
&+\left[a{2t-2\choose 2t-3}+b{2t-2\choose 2t-2}\right]x^{2t}+\\
&+\left[a{2t-2 \choose 2t-2}\right]x^{2t+1}.
\end{align*}
}}
}}
Let $c_i$ be the coefficient of $x^i$ in $p_{D}(x)$. Then, for $t\geq 4$ and $j\geq 0$,
\begin{equation}
\label{beq1}
c_{t-j}=a{2t-2\choose t-3-j} +b{2t-2\choose t-2-j}+c{2t-2\choose t-1-j}+d{2t-2\choose t-j}.
\end{equation}
Using the formula ${n\choose k} = \frac{(n - k + 1)}{k} { n\choose k - 1}$, we can rewrite \eqref{beq1} as

\begin{align*}
c_{t-j} &= \binom{2 t-2}{t-3-j}
\left(
a
+b\cdot\frac{j+t+1}{t-j-2}
+c\cdot\frac{j+t+1}{t-j-2}\cdot\frac{j+t}{t-j-1}\right. +\\
 &+\left. d\cdot\frac{j+t-1}{t-j-2}\cdot\frac{j+t}{t-j-1}\cdot\frac{j+t+1}{t-j}\right) = \\
&= \binom{2 t-2}{t-3-j} \cdot f(t,j),
\end{align*}
where $f(t,j)=\frac{t (j+t) \left(t \left(j^2+j (3-4 t)-t (5 t+11)+2\right)+2\right)}{(j-t) (j-t+1) (j-t+2)}$.
Then, $c_{t-j}\geq c_{t-(j+1)}$ if and only if
$$\binom{2 t-2}{t-3-j} \cdot f(t,j)\geq \binom{2 t-2}{t-3-(j+1)} \cdot f(t,j+1),$$
which is equivalent to
\begin{equation}
\label{beq2}
\frac{f(t,j)}{t-3-j}\geq \frac{f(t,j+1)}{t+j+2}.
\end{equation}
It can be verified using a software with symbolic algebra for rational functions that \eqref{beq2} holds for all integers $j$ and $t$ with $t\geq 4$ and $0\leq j < t-3$; for $t\leq 3$, it can be verified that $c_{t-j}\geq c_{t-(j+1)}$ by explicitly computing the distance characteristic polynomial. Similarly as above, it can be shown that $c_{t+j}\geq c_{t+(j+1)}$ for all $t$ and $j\geq 0$. Thus, it follows that $c_0\leq \ldots\leq c_t\geq \ldots \geq c_{2t+1}$, and hence the distance characteristic polynomial of $G$ is unimodal with peak  at $t = \lfloor \frac{2t+1}{2}\rfloor=\lfloor \frac{n}{2}\rfloor$.
\end{proof}

Consider now the class of \textit{barbell graphs} $B(t,\ell)$, which are obtained by connecting two cliques $K_t$ with a path on $\ell$ vertices by identifying the leaves of the path with one of the vertices in each clique. We also consider \textit{lollipop graphs} $L(t,\ell)$, which are obtained by adding a path on $\ell$ vertices to a single clique $K_t$ so that one of the leaves of the path is also a vertex of the clique. We begin by locating the peak of the distance  characteristic polynomial for a barbell graph with $\ell\in\{2,3,4,5\}$.

\begin{thm}\label{thm.barbell}	
	Let $G$ be a barbell graph $B(t,\ell)$, so that $|V(G)|=n=2t+\ell-2$. Then the sequence of coefficients of the distance characteristic polynomial of $G$ is unimodal with peak at $t-1$ if $\ell=2$ and at $t$ if $\ell\in\{3,4,5\}$.
\end{thm}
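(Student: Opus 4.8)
The plan is to mimic the argument used above for $W(2,t)$: reduce $p_{D}(x)$ to a high power of $x+1$ times a low-degree factor whose coefficients are explicit polynomials in $t$, and then pin down the peak by a finite comparison of consecutive coefficients. First note that $B(t,\ell)$ is a block graph---its blocks are the two copies of $K_t$ together with the $\ell-1$ edges of the connecting path---so Lemma~\ref{claim} gives $(-1)^{n-1}c_i(G)>0$ for $0\le i\le n-2$, real-rootedness of $p_{D}$ and Lemma~\ref{knownunimodalitylemma}$(i)$ make this sequence log-concave, and Theorem~\ref{theo:unimodality} already supplies unimodality. Hence only the location of the peak is in question. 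Since for a positive log-concave sequence the ratios of consecutive terms are non-increasing, it suffices to prove the two strict inequalities $(-1)^{n-1}c_{p-1}(G)<(-1)^{n-1}c_{p}(G)$ and $(-1)^{n-1}c_{p}(G)>(-1)^{n-1}c_{p+1}(G)$ at the claimed peak $p$ (with $p=t-1$ when $\ell=2$ and $p=t$ when $\ell\in\{3,4,5\}$): log-concavity then forces strict increase on $[0,p]$ and strict decrease on $[p,n-2]$. Alternatively one may reprove unimodality en route by establishing the whole chain $|c_0|\le\cdots\le|c_p|\ge\cdots\ge|c_{n-2}|$, exactly as for $W(2,t)$.

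Next I would compute $p_{D}(x)$ from the finest equitable partition of $B(t,\ell)$, i.e.\ the orbit partition of its automorphism group, whose classes are the $t-1$ non-cut vertices of each clique (two classes), the two cut vertices, and the $\ell-2$ internal path vertices (one class each), for a total of $\ell+2$ classes. Since in each clique the $t-1$ non-cut vertices are mutually at distance $1$ and equidistant from everything outside, the sum-zero vectors supported on them are $(-1)$-eigenvectors of $D(G)$, contributing multiplicity $t-2$ per clique; together with the quotient matrix $B$ of the partition (of size $\ell+2$, with entries affine in $t$) this yields
$$p_{D}(x)=(x+1)^{2t-4}\,q_\ell(x),\qquad q_\ell(x)=\det(xI-B),$$
a monic polynomial of degree $\ell+2=n-(2t-4)$ whose coefficients are explicit polynomials in $t$; the clique-swap involution further block-diagonalizes $B$ into two blocks of size at most $4$, so $q_\ell$ is a product of two easily computed characteristic polynomials. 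I would carry this out for each $\ell\in\{2,3,4,5\}$.

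Expanding the product then gives $(-1)^{n-1}c_i(G)=-\sum_{m}(q_\ell)_m\binom{2t-4}{i-m}$. Plugging in $i\in\{p-1,p,p+1\}$---a window of $O(\ell)$ consecutive binomial coefficients, each of the form $\binom{2t-4}{t+O(1)}$ and hence nonzero once $t$ exceeds a small explicit bound---and repeatedly using $\binom{N}{k}=\frac{N-k+1}{k}\binom{N}{k-1}$ to factor out a common binomial coefficient reduces each of the two target inequalities to a comparison of two explicit rational functions of $t$. After clearing denominators these are verified for all sufficiently large integers $t$ by symbolic computation over rational functions (as with~\eqref{beq2}), and the finitely many remaining small values of $t$---together with the degenerate cases (small $t$ with $\ell=2$) where the window of binomial coefficients falls off the end---are handled by computing $p_{D}$ directly.

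I expect the main obstacle to be bookkeeping rather than anything conceptual: the quotient-matrix computation and the binomial-ratio reduction must be carried out separately for the four values of $\ell$, the resulting rational functions grow in degree with $\ell$, and one must take care that the generic convolution formula for $c_i(G)$ is valid at the three indices near the peak (which it is for $t$ large, leaving a finite residual check). Since unimodality is already free from Theorem~\ref{theo:unimodality}, the whole content is deciding whether the maximum sits at $t-1$ or at $t$; in particular the low-degree factor $q_\ell(x)$ is precisely what drags the peak away from $\lfloor n/2\rfloor$ in the cases $\ell\in\{2,4,5\}$.
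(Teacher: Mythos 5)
Your proposal is correct and follows essentially the same route as the paper: an equitable partition with the two sets of $t-1$ non-cut clique vertices, the two cut vertices, and the internal path vertices yields $p_D(x)=(x+1)^{2t-4}q_\ell(x)$ with $q_\ell$ the quotient characteristic polynomial, after which the coefficients near the claimed peak are compared by factoring out a common binomial coefficient via $\binom{N}{k}=\frac{N-k+1}{k}\binom{N}{k-1}$, verifying the resulting rational-function inequalities in $t$ symbolically, checking small $t$ directly, and invoking Theorem~\ref{theo:unimodality} to upgrade the local comparison to the global peak location. The only cosmetic difference is your labeling of the partition as the automorphism orbit partition (it is in fact a refinement of it, since the clique swap merges symmetric classes) and the optional block-diagonalization of the quotient matrix, neither of which affects the argument.
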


\begin{proof}
We prove the claim for $\ell=2$; the cases $\ell\in\{3,4,5\}$ can be shown analogously.
Consider a quotient partition into $4$ vertex sets: $2$ of them are the two vertices of the path of length $2$ connecting the cliques and the other $2$ correspond to the remaining $t-1$ vertices of each clique. We then have the quotient matrix
$$B=\left(
	\begin{array}{cccc}
		t-2 & 1 & 2 & 3 (t-1) \\
		t-1 & 0 & 1 & 2 (t-1) \\
		2 (t-1) & 1 & 0 & t-1 \\
		3 (t-1) & 2 & 1 & t-2 \\
	\end{array}
	\right)$$
	with characteristic polynomial $$p_B(x)=-x^4+(2 t-4) x^3+\left(8 t^2-4 t-4\right) x^2+\left(14 t^2-12t\right) x+5 t^2-4 t.$$ 
	From \cite[Theorem 3.3]{HR21}, it follows that the distance characteristic polynomial of $G$ is 
	{\footnotesize{
	$$p_D(x)=(x+1)^{2t-4}\left(-x^4+(2 t-4) x^3+\left(8 t^2-4 t-4\right) x^2+\left(14 t^2-12t\right) x+5 t^2-4 t\right).$$
	}}
	
Let $a=-1$, $b=2t-4$, $c=8t^2-4t-4$, $d=14t^2-12t$, and $e=5t^2-4t$. We can use $$(x+1)^{2t-4}=\sum\limits_{k=0}^{2t-4}\binom{2t-4}{k} x^k$$ to write down a formula in the case $5\leq k\leq 2t-4$:
	$$c_{k}=a\binom{2t-4}{k-4}+b\binom{2t-4}{k-3}+c\binom{2t-4}{k-2}+d\binom{2t-4}{k-1}+e\binom{2t-4}{k}.$$
	Using the identity $\binom{n}{k}=\frac{n-k+1}k\binom{n}{k-1}$ we obtain

 {\small{
\begin{align*}
	c_{k}&=\binom{2t-4}{k-4}\left(a+\frac{2t-k}{k-3}\cdot b+\frac{2t-k-1}{k-2}\cdot\frac{2t-k}{k-3}\cdot c+\right. \\
	&+\left.\frac{2t-k-2}{k-1}\cdot\frac{2t-k-1}{k-2}\cdot\frac{2t-k}{k-3}\cdot d\right. + \\ &+\left.\frac{2t-k-3}{k}\cdot\frac{2t-k-2}{k-1}\cdot\frac{2t-k-1}{k-2}\cdot\frac{2t-k}{k-3}\cdot e \right) = \\
	&= \binom{2t-4}{k-4}f(t,k).
\end{align*}
 }}
	Then, $c_k\geq c_{k-1}$ if and only if
	$$\binom{2t-4}{k-4}f(t,k)\geq \binom{2t-4}{k-5}f(t,k-1),$$
	which by applying the identity $\binom{n}{k}=\frac{n-k+1}k\binom{n}{k-1}$ leads to
	$$\frac{f(t,k)}{k-4}\geq \frac{f(t,k-1)}{2t-k+1}.$$
	If $k=t$ then the simplified form of the above inequality is
	$$\frac{21 t^4-35 t^3+t^2+3 t+6}{t^4-8 t^3+17 t^2+2 t-24}\leq 0,$$
	and for $k=t-1$ we have $$\frac{33 t^5+41 t^4-183 t^3+7 t^2+114 t-24}{(t-5) (t-4) (t-3)
		(t-2) (t+2)}\geq 0.$$
	It is straightforward to verify that if $t\geq 6$, the inequality for the $k=t-1$ case holds, but the one for $k=t$ does not, meaning $c_{t-2}\leq c_{t-1}> c_t$. Since the sequence of coefficients is unimodal by Theorem~\ref{theo:unimodality}, this implies that $t-1$ is indeed the peak location. The cases $t\leq5$ can be checked by straightforward calculation.
\end{proof}

An analogous argument as in the proof of Theorem \ref{thm.barbell}	 can also be used to show the peak location of lollipop graphs with small~$\ell$.

\begin{cor} Let $G$ be a lollipop graph $L(t,\ell)$, so that $|V(G)|=n=t+\ell-1$. Let $\ell\in\{2,3,4,5\}$. Then the sequence of coefficients of the distance characteristic polynomial of $G$ is unimodal with peak at $\lfloor\frac{n-1}2\rfloor$.
\end{cor}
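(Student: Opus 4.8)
The plan is to mimic the proof of Theorem~\ref{thm.barbell} almost verbatim, replacing one of the two cliques by a single pendant path-endpoint. Concretely, for $L(t,\ell)$ I would take the equitable partition whose cells are: the $\ell-1$ individual vertices lying on the path (including the clique-vertex that is the path's endpoint, if one counts it as a path vertex — in any case a single "attachment" vertex of the clique), together with one cell consisting of the remaining $t-1$ vertices of $K_t$, which are mutually at distance $1$ and at equal distances to everything on the path. This yields an equitable partition into $\ell$ (or $\ell+1$) cells, so by \cite[Theorem 3.3]{HR21} (the same lifting result used for the barbell) the distance characteristic polynomial factors as $p_D(x)=(x+1)^{t-2}p_B(x)$, where $p_B(x)$ is the characteristic polynomial of a small quotient matrix $B$ of order $\ell$ (or $\ell+1$), and $\deg p_B \le 6$ for $\ell\le 5$.

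Next I would expand $(x+1)^{t-2}p_B(x)$ exactly as in the barbell proof: write $p_B(x)=\sum_{m} a_m x^m$ with the $a_m$ explicit polynomials in $t$, and obtain for $k$ in the "bulk" range the formula $c_k=\sum_{m} a_m\binom{t-2}{k-m}$. Using the identity $\binom{n}{k}=\frac{n-k+1}{k}\binom{n}{k-1}$ repeatedly, factor $c_k=\binom{t-2}{k-\deg p_B}\,f(t,k)$ for an explicit rational function $f$, so that the inequality $c_k\ge c_{k-1}$ reduces to a single rational inequality $\frac{f(t,k)}{k-\deg p_B}\ge\frac{f(t,k-1)}{t-1-k+\deg p_B}$. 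I would then substitute the two candidate values $k=\lfloor\frac{n-1}{2}\rfloor$ and $k=\lfloor\frac{n-1}{2}\rfloor+1$ (with $n=t+\ell-1$, so these are expressions linear in $t$ for each fixed $\ell$), simplify the resulting rational functions of $t$, and check that the inequality holds at $k=\lfloor\frac{n-1}{2}\rfloor$ but fails at $k=\lfloor\frac{n-1}{2}\rfloor+1$ for all $t$ above a small bound; combined with the unimodality guaranteed by Theorem~\ref{theo:unimodality}, this pins the peak at $\lfloor\frac{n-1}{2}\rfloor$. The finitely many small cases $t\le t_0$ are dispatched by direct computation of $p_D$.

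The main obstacle is the same bookkeeping issue that forced the barbell proof to split into separate cases $\ell=2$ and $\ell\in\{3,4,5\}$: the parity of $n=t+\ell-1$ interacts with the floor function, so the "candidate peak" $\lfloor\frac{n-1}{2}\rfloor$ is $\frac{t+\ell-2}{2}$ or $\frac{t+\ell-3}{2}$ depending on the parity of $t+\ell$, and the boundary formula for $c_k$ (valid only when $\deg p_B \le k \le t-2$) must be checked to actually contain the candidate peak for all relevant $t$ — for small $\ell$ and small $t$ the "bulk" range may be empty or nearly so, which is exactly why the explicit small-$t$ check is needed. Beyond that, one must verify that no edge coefficients (the $c_k$ for $k$ near $0$ or near $n$, where the clean binomial-ratio formula breaks) can exceed $c_{\lfloor(n-1)/2\rfloor}$; since those coefficients are manifestly small and the sequence is already known to be unimodal, this follows once the two neighbouring comparisons at the claimed peak have the stated signs, but it should be stated explicitly. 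The rational-function inequalities themselves are routine to verify with symbolic algebra, exactly as invoked in the barbell proof, so I would simply assert "an analogous argument" and record the two key simplified inequalities for each value of $\ell$.
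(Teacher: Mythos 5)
Your proposal is correct and is essentially the paper's own route: the paper proves this corollary simply by invoking "an analogous argument" to the barbell case of Theorem~\ref{thm.barbell}, i.e.\ the same equitable-partition factorization $p_D(x)=(x+1)^{t-2}p_B(x)$ (with the quotient matrix of order $\ell+1$, matching your hedge), the binomial-ratio reduction to rational inequalities in $t$ checked symbolically at the candidate peak and its successor, unimodality from Theorem~\ref{theo:unimodality}, and direct computation for small $t$. Your extra remarks on the parity of $n=t+\ell-1$ and on the validity range of the bulk formula are sensible bookkeeping refinements of that same argument, not a different method.
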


\section{Concluding remarks}

We end up by discussing another extremal case of block graphs, namely, block paths. According to SageMath simulations, the peak of a block path seems to be located between $\lfloor\frac{n}{3}\rfloor$ and $\lfloor\frac{n}{2}\rfloor$. A natural approach to extend the result from paths to block paths would be to generalize Collins proof for the distance characteristic polynomial of paths~\cite{C89} and calculate the distance characteristic polynomial coefficients of block paths using the formula for edge-weighted block graphs (see Theorem~\ref{GHH}). In general, the coefficient formula is given by
$$c_{n-k}=(-1)^{n-k}\sum\limits_{1\leq i_1\leq\dots\leq i_k\leq n}\det D[v_{i_1},\dots,v_{i_k}],$$
where $D[v_{i_1},\dots,v_{i_k}]$ is a $k\times k$ submatrix of a $D$ whose rows and columns are indexed by vertices $v_{i_1},\dots,v_{i_k}$. The sum is over all possible ways to choose $k$ out of $n$ vertices.
The main idea of Collins' approach is to interpret $D[v_{i_1},\dots,v_{i_k}]$ as a distance matrix of an edge-weighted block graph, and then use Theorem~\ref{GHH} to calculate its determinant. However, for the case when the blocks within the path have size at least $3$, the description of blocks of $D[v_{i_1},\dots,v_{i_k}]$ can get very irregular: depending on a particular choice of $k$ vertices, they can be anything from $2$- to $k$-cliques with weights of edges seemingly following no pattern. Thus, for the more general setting of block graphs, it seems hopeless to use an analogous approach as Collins does for trees.

Our computational results suggest the following question:

\begin{ques}
Are the coefficients of the distance characteristic polynomial of any block graph with $n$ vertices unimodal with peak between $\lfloor\frac{n}{3}\rfloor$ and $\lfloor\frac{n}{2}\rfloor$?
\end{ques}

 We conclude by observing that the formula for the normalized coefficients of the characteristic polynomial of a tree \cite[page 81]{GL78} probably also holds for uniform block graphs. If this was the case, then we could define the normalized coefficients for a $t$-uniform block graph (the distance eigenvalues of Hamming graphs \cite[Section 3.1]{GRWC2015v2} suggest that there exists such a common factor).

\subsection*{Acknowledgements}

Aida Abiad is partially supported by the FWO (Research Foundation Flanders), grant number 1285921N. Antonina P. Khramova is supported by the NWO (Dutch Science Foundation), grant number OCENW.KLEIN.475. Jack H.~Koolen is partially supported by the National Natural Science Foundation of China (No. 12071454), Anhui Initiative in Quantum Information Technologies (No. AHY150000) and the National Key R and D Program of China (No. 2020YFA0713100).


\end{document}